\newtheorem{lem}{Lemma}[section]
\newtheorem{thm}[lem]{Theorem}
\newtheorem{cor}[lem]{Corollary}
\newtheorem{prop}[lem]{Proposition}
\newtheorem{rem}[lem]{Remark}
\newcommand{\im}{\mathrm{im}}
\def\ker{\mathop{\rm ker}}
\def\im{\mathop{\rm im}}
\newcommand{\rank}{\mathrm{rank}}
\newcommand{\reg}{\mathrm{Reg}}
\begin{document}
\begin{center}
\textbf{\large{Transformation Semigroups Which Are Disjoint Union of Symmetric Groups}}\\%\footnote{Part of this work was presented in the Arbeitstagung Allgemeine Algebra conference (AAA93), Bern, Switzerland, February 10-12, 2017.}}\\
\vspace{0.5 cm} Utsithon Chaichompoo and Kritsada Sangkhanan \orcidlink{0000-0002-1909-7514}\footnote{Corresponding author.} 
\end{center}

%%%%%%%%%%%%%%%%%%%%%%%%%%%abstract%%%%%%%%%%%%%%%%%%%%%%%%%%%%%%%%%%%%%%%%%%%%%%%%%%%%%%%%%%%%%%%%%%%%%%%%%%%%%%%%%%%%%%%%%%%%%

\begin{abstract}
	Let $X$ be a nonempty set and $T(X)$ the full transformation semigroup on $X$. For any equivalence relation $E$ on $X$, define a subsemigroup $T_{E^*}(X)$ of $T(X)$ by
	$$
	T_{E^*}(X)=\{\alpha\in T(X):\text{for all}\ x,y\in X, (x,y)\in E\Leftrightarrow (x\alpha,y\alpha)\in E\}.
	$$
	We have the regular part of $T_{E^*}(X)$, denoted by $\reg(T)$, is the largest regular subsemigroup of $T_{E^*}(X)$. Defined the subsemigroup $Q_{E^*}(X)$ of $T_{E^*}(X)$ by
	$$
	Q_{E^*}(X)=\{\alpha\in T_{E^*}(X):|A\alpha|=1\ \text{and}\ A\cap X\alpha\neq\emptyset\ \text{for all}\ A\in X/E\}.
	$$
	Then we can prove that this subsemigroup is the (unique) minimal ideal of $\reg(T)$ which is called the kernel of $\reg(T)$. In this paper, we will compute the rank of $Q_{E^*}(X)$ when $X$ is finite and prove an isomorphism theorem. Finally, we describe and count all maximal subsemigroups of $Q_{E^*}(X)$ where $X$ is a finite set.
\end{abstract}
\noindent\textbf{2020 Mathematics Subject Classification:} 	20M17, 20M19, 20M20\\
\noindent\textbf{Keywords:} transformation semigroup, equivalence relation, right group, rank, maximal subsemigroup

%%%%%%%%%%%%%%%%%%%%%%%%%%%Introduction%%%%%%%%%%%%%%%%%%%%%%%%%%%%%%%%%%%%%%%%%%%%%%%%%%%%%%%%%%%%%%%%%%%%%%%%%%%%%%%%%%%%%%%%%%%%%

\section{Introduction}

The set of all functions from a set $X$ into itself, denoted as $T(X)$, forms a regular semigroup under the composition of functions. This semigroup is known as the \textit{full transformation semigroup on X} and is important in algebraic semigroup theory. Similar to Cayley's Theorem for groups, it can be shown that any semigroup $S$ can be embedded in the full transformation semigroup $T(S^1)$ where $S^1$ is a monoid obtained from $S$ by adjoining an identity if necessary.

For an equivalence relation $E$ on a set $X$, in 2005, H. Pei \cite{Pei} introduced a new semigroup called the \textit{transformation semigroup that preserves equivalence}, $T_E(X)$, defined by
$$
T_{E}(X)=\{\alpha\in T(X):\forall x,y\in X,(x,y)\in E\Rightarrow (x\alpha,y\alpha)\in E\}.
$$ 
This semigroup is a generalization of the full transformation semigroup $T(X)$ and is defined  as a subsemigroup of $T(X)$. Clearly, if $E=X\times X$, then $T_E(X)$ is equal to $T(X)$. In the field of topology, $T_E(X)$ refers to the semigroup comprising all continuous self-maps of a given topological space $X$ that satisfy the condition of having the $E$-classes serve as a basis. This semigroup is commonly denoted as $S(X)$. The regularity of elements and Green's relations for $T_E(X)$ were investigated in \cite{Pei}. In 2008, L. Sun, H. Pei, and Z. Cheng \cite{Sun} characterized the natural partial order $\leq$ on $T_E(X)$ and studied its compatibility. They also described the maximal, minimal, and covering elements of $T_E(X)$.

Later, in 2010, L. Deng, J. Zeng, and B. Xu \cite{Deng} introduced a new semigroup called the \textit{transformation semigroup that preserve double direction equivalence}, $T_{E^*}(X)$, defined by
$$
T_{E^*}(X)=\{\alpha\in T(X):\forall x,y\in X,(x,y)\in E\Leftrightarrow (x\alpha,y\alpha)\in E\}.
$$ 
We can see that this semigroup is a subsemigroup of the transformation semigroup that preserves equivalence, $T_E(X)$, and it is defined as the set of all functions in $T_E(X)$ that preserve the reverse direction of the equivalence relation $E$ on $X$. If $E$ is the universal relation on $X$, then $T_{E^*}(X)$ is equal to $T(X)$, which means that this semigroup is also a generalization of $T(X)$. Additionally, $T_{E^*}(X)$ is a semigroup consisting of continuous self-maps of the topological space $X$ where the $E$-classes form a basis. It is commonly referred to as a semigroup of continuous functions (see \cite{Magill} for details). In \cite{Deng}, the authors investigated the regularity of elements and Green's relations in $T_{E^*}(X)$. In 2013, L. Sun and J. Sun \cite{Sun2} gave a characterization of the natural partial order and determined the compatibility property in $T_{E^*}(X)$. They also described the maximal and minimal elements and studied the existence of the greatest lower bound of two elements in this semigroup.

In a recent study, K. Sangkhanan \cite{Sangkhanan} investigated the regular part, denoted $\reg(T)$, of the transformation semigroup $T_{E^*}(X)$ and showed that it is the largest regular subsemigroup of $T_{E^*}(X)$. He also described Green's relations and ideals of $\reg(T)$. If the set $X$ is partitioned by the equivalence relation $E$ into subsets $A_i$ for all $i$ in the index set $I$, the author defined the subsemigroup $Q(\mathbf{2})$ of $\reg(T)$ as follows:
$$
Q(\mathbf{2})=\{\alpha\in \reg(T):|A_i\alpha|<2\ \text{for all}\ i\in I\},
$$
or, equivalently,
$$
Q(\mathbf{2})=\{\alpha\in T_{E^*}(X):|A_i\alpha|=1\ \text{and}\ A_i\cap X\alpha\neq\emptyset\ \text{for all}\ i\in I\}.
$$
He also proved that for each $\alpha\in Q(\mathbf{2})$, $|X\alpha\cap A|=1$ for all $A\in X/E$, meaning that $X\alpha$ is a cross section of the partition $X/E$ induced by the equivalence relation $E$, i.e., each $E$-class contains exactly one element of $X\alpha$. He then showed that $Q(\mathbf{2})$ is the (unique) minimal ideal of $\reg(T)$, which is referred to as the \textit{kernel} of $\reg(T)$ (see \cite{Clifford1} for details). Finally, the author demonstrated that the kernel $Q(\mathbf{2})$ of $\reg(T)$ is a right group and can be expressed as a union of symmetric groups, and that every right group can be embedded in the kernel $Q(\mathbf{2})$.

To improve clarity, we will replace the use of $Q(\mathbf{2})$ with $Q_{E^*}(X)$, where $E$ represents the same equivalence relation as in $T_{E^*}(X)$.

%%%%%%%%%%%%%%%%%%%%%%%%%%%%%Basic Properties%%%%%%%%%%%%%%%%%%%%%%%%%%%%%%%%%%%%%%%%%%%%%%%%%%%%%%%%%%%%%%%%%%%%%%%%%%%%%%%%%%%%%%%%%%%

\section{Basic Properties}
Let $X$ be a nonempty set and $E$ an equivalence relation on $X$. The family of all equivalence classes, denoted $X/E$, is a partition of $X$. For each $\alpha\in T(X)$ and $A\subseteq X$, let $A\alpha=\{a\alpha:a\in A\}$. Evidently, by this notation, $X\alpha$ means the range or image of $\alpha$. Let $S$ be a subsemigroup of $T(X)$. The partition of a member $\alpha$ in $S$, denoted $\pi(\alpha)$, is the family of all inverse image of elements in the range of $\alpha$, that is,
$$
\pi(\alpha)=\{x\alpha^{-1}:x\in X\alpha\}.
$$
It is easy to see that $\pi(\alpha)$ is a partition of $X$ induced by $\alpha$ and $\pi(\alpha)=X/\ker(\alpha)$ where $\ker(\alpha)=\{(x,y)\in X\times X:x\alpha=y\alpha\}$ is an equivalence relation on $X$. In addition, due to \cite{Deng}, define a mapping $\alpha_*$ from $\pi(\alpha)$ onto $X\alpha$ by
\begin{center}{$(x\alpha^{-1})\alpha_*=x$ for each $x\in X\alpha.$}\end{center}

We recall that the relations $\mathscr{L},$ $\mathscr{R},$ $\mathscr{H},$ $\mathscr{D}$ and $\mathscr{J}$ are {\it Green's relations} on a semigroup $S$. For each $a\in S$, we denoted $\mathscr{L}$-class, $\mathscr{R}$-class, $\mathscr{H}$-class, $\mathscr{D}$-class, and $\mathscr{J}$-class containing $a$ by $L_a,R_a,H_a,D_a$ and $J_a$, respectively. 

A semigroup $S$ is called a \textit{right simple semigroup} if it does not contain any proper right ideals, and it is called a \textit{right group} if it is both right simple and left cancellative. This is equivalent to saying that a semigroup $S$ is a right group if and only if, for any elements $a$ and $b$ in $S$, there is only one element $x$ in $S$ such that $ax=b$. Therefore, the $\mathscr{R}$-relation on a right group $S$ is trivial. Additional descriptions of right groups can be found in the following lemma, which combines statements from Exercises 2 and 4 in \S1.11 of \cite{Clifford1}.

\begin{lem}\label{111}
	Let $S$ be a semigroup. The following statements are equivalent.
	\begin{enumerate}[(1)]
		\item $S$ is a right group.
		\item $S$ is a union of disjoint groups such that the set of identity elements of the groups is a right zero subsemigroup of $S$.
		\item $S$ is regular and left cancellative.
	\end{enumerate}
\end{lem}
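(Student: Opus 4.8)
The plan is to prove the cyclic chain $(1)\Rightarrow(3)\Rightarrow(2)\Rightarrow(1)$, relying throughout on the equivalent description of a right group recalled just before the lemma: $S$ is a right group exactly when, for all $a,b\in S$, there is a \emph{unique} $x\in S$ with $ax=b$. Everything reduces to elementary manipulations with idempotents, the only non-obvious input being left cancellativity, which must be invoked at a handful of precise points.

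For $(1)\Rightarrow(3)$, left cancellativity is immediate: if $ax=ay$, then applying uniqueness with $b=ax$ gives $x=y$. For regularity, fix $a$ and choose $e$ with $ae=a$; right-multiplying by $e$ and cancelling $a$ on the left yields $e=e^{2}$, and if $c$ satisfies $ec=b$ then $eb=e(ec)=e^{2}c=ec=b$, so $e$ is a left identity of $S$. Now pick $x$ with $ax=e$; then $axa=ea=a$, so $a$ is regular.

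For $(3)\Rightarrow(2)$, the central step is to show that every idempotent of $S$ is a left identity. If $e,f\in S$ are idempotent, then $e\cdot(ef)=(ee)f=ef=e\cdot f$, so cancelling $e$ on the left gives $ef=f$; thus the set $E(S)$ of idempotents is closed and is a right zero subsemigroup. For arbitrary $a$, write $a=axa$ (regularity) and note $ax$ is idempotent, so $ea=e(axa)=(e\cdot ax)a=(ax)a=a$, which proves the claim. Put $G_{e}=\{a\in S:ae=a\}$ for $e\in E(S)$. Each $G_{e}$ is a subsemigroup with $e$ a two-sided identity (the left identity property gives $ea=a$); for $a\in G_{e}$ choose $a'$ with $aa'a=a$ and $a'aa'=a'$ (possible since $S$ is regular), so that $b:=ea'e$ lies in $G_{e}$ and $ab=(aa')e=e$ by the right zero law, after which $a(ba)=(ab)a=ea=a=ae$ and left cancellation force $ba=e$; hence $G_{e}$ is a group. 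The $G_{e}$ are pairwise disjoint: if $a\in G_{e}\cap G_{f}$ and $b$ is the inverse of $a$ in $G_{e}$, then $e=ba=(ba)f=ef=f$. Finally every $a\in S$ lies in $G_{xa}$ (with $a=axa$), so $S=\bigsqcup_{e\in E(S)}G_{e}$, a disjoint union of groups whose identities form a right zero subsemigroup.

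For $(2)\Rightarrow(1)$, suppose $S=\bigsqcup_{i}G_{i}$ with identities $e_{i}$ forming a right zero subsemigroup. Given $a\in G_{i}$ and any $b\in S$, the right zero law gives $e_{i}b=b$, so $x:=a^{-1}b$ satisfies $ax=(aa^{-1})b=e_{i}b=b$; hence $aS=S$ for all $a$ and $S$ is right simple. Left cancellativity follows the same pattern: from $ax=ax'$, left-multiplication by $a^{-1}$ gives $e_{i}x=e_{i}x'$, and $e_{i}x=x$, $e_{i}x'=x'$ again by the right zero law. Thus $S$ is a right group. The step I expect to require the most care is $(3)\Rightarrow(2)$: one must invoke left cancellativity at exactly the right moment — above all to upgrade $ab=e$ to $ba=e$, since the natural candidate inverse only visibly satisfies the first identity — and one must verify that $\{G_{e}\}$ genuinely partitions $S$ rather than merely covering it.
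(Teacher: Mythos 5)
Your proof is correct. Note that the paper does not actually prove this lemma: it is stated as a combination of Exercises 2 and 4 in \S1.11 of Clifford and Preston, so there is no in-paper argument to compare against. Your cyclic chain $(1)\Rightarrow(3)\Rightarrow(2)\Rightarrow(1)$ is a valid self-contained derivation, and you correctly identify the delicate points: using left cancellation to get $ef=f$ for idempotents, promoting every idempotent to a global left identity via regularity, upgrading $ab=e$ to $ba=e$ by cancelling $a$ in $a(ba)=a=ae$, and checking that the classes $G_e$ are genuinely disjoint. The only cosmetic slip is attributing the equality $a(ea'e)=(aa')e$ to the right zero law, when it in fact uses that $e$ is a left identity (so $ea'=a'$); the right zero law is what then gives $(aa')e=e$. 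This does not affect the validity of the argument.
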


Refer to \cite[Exercise 6 for \S2.6]{Howie} and \cite[Exercise 3 for \S1.11]{Clifford1}, every right group $S$ can be written as a union of disjoint subgroups, each of which is isomorphic to one another. These subgroups are given by $Se$, where $e$ is an idempotent element of $S$ and serves as the group identity for $Se$. If $e$ and $f$ are distinct idempotent elements of $S$, then the map $x\mapsto xf$ is an isomorphism between the subgroups $Se$ and $Sf$. Additionally, the $\mathcal{H}$-class $H_e$ and the subgroup $Se$ are equal for all idempotent elements $e$ of $S$. This allows us to express $S$ as the (disjoint) union of all of its subgroups:
$$
S=\bigcup_{e\in E(S)}Se=\bigcup_{e\in E(S)}H_e.
$$
It should be observed that the number of $\mathcal{H}$-classes and idempotents in $E(S)$ are identical.

In particular, for the right group $Q_{E^*}(X)$, we have the following results which appeared in \cite{Sangkhanan}.

\begin{cor}[{\cite[Corollary 3.8]{Sangkhanan}}]
	For each $\alpha\in Q_{E^*}(X)$, $H_\alpha=\{\beta\in Q_{E^*}(X):X\alpha= X\beta\}$ forms a subgroup of  $Q_{E^*}(X)$.
\end{cor}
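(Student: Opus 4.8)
The plan is to exploit the already-established fact (recalled above from \cite{Sangkhanan}) that $Q_{E^*}(X)$ is a right group. By Lemma~\ref{111} together with the discussion following it, $Q_{E^*}(X)$ is the disjoint union $\bigcup_{e\in E(Q_{E^*}(X))}H_e$ of all its $\mathscr{H}$-classes, and each $H_e=Q_{E^*}(X)\,e$ is a subgroup with identity $e$; moreover the $\mathscr{R}$-relation is trivial, so $\mathscr{H}=\mathscr{L}$ on $Q_{E^*}(X)$. In particular $H_\alpha$ is automatically a subgroup for every $\alpha\in Q_{E^*}(X)$, so the real content of the statement is the set-theoretic identification $H_\alpha=\{\beta\in Q_{E^*}(X):X\alpha=X\beta\}$.

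For the inclusion $\subseteq$, I would first note that the image of an element depends only on its $\mathscr{H}$-class: if $e$ is the idempotent of the subgroup $H_\alpha$ and $\alpha^{-1}$ denotes the inverse of $\alpha$ in that group, then $\alpha e=\alpha$ gives $X\alpha\subseteq Xe$, while $\alpha^{-1}\alpha=e$ gives $Xe\subseteq X\alpha$; hence $X\beta=Xe=X\alpha$ for every $\beta\in H_\alpha=H_e$. (Equivalently, $\alpha\,\mathscr{L}\,\beta$ means $\alpha=u\beta$ and $\beta=v\alpha$ for some $u,v\in Q_{E^*}(X)^1$, whence $X\alpha\subseteq X\beta$ and $X\beta\subseteq X\alpha$.) For the reverse inclusion I would show that distinct $\mathscr{H}$-classes --- equivalently, by the previous remark, distinct idempotents of $Q_{E^*}(X)$ --- have distinct images. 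Here the structural facts recalled from \cite{Sangkhanan} enter: $X\gamma$ is a cross section of $X/E$ for every $\gamma\in Q_{E^*}(X)$, so if $e^2=e$ has image $Y:=Xe$, then $e$ fixes $Y$ pointwise and, using $[x]_E\cap Y\neq\emptyset$ for each $x$ together with the reflecting property of $e$ (that $(xe,ye)\in E$ implies $(x,y)\in E$), one checks that $xe$ must be the unique element of $Y\cap[x]_E$. Thus an idempotent of $Q_{E^*}(X)$ is completely determined by its image, so $Xe=Xf$ forces $e=f$ for idempotents $e,f$. Combining with the first inclusion: if $X\alpha=X\beta$ and $e,f$ are the idempotents of $H_\alpha,H_\beta$, then $Xe=X\alpha=X\beta=Xf$, hence $e=f$ and therefore $\beta\in H_f=H_e=H_\alpha$.

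The step I expect to be the main obstacle is the uniqueness of an idempotent from its image --- pinning down where each $E$-class is sent given only the cross section $Y$; a priori an element of $Q_{E^*}(X)$ collapsing $[x]_E$ could carry it to any point of any class, and it is precisely the reflecting condition built into membership of $T_{E^*}(X)$ that forces $xe\in[x]_E$, hence $xe\in Y\cap[x]_E$. Everything else is routine bookkeeping within the right-group framework already in place. A more computational alternative for the reverse inclusion is to fix, for a cross section $Y$, the "projection" idempotent $e_Y$ sending each $E$-class to its representative in $Y$, and for $\gamma\in Q_{E^*}(X)$ with $X\gamma=Y$ to construct a "section" $\delta\in Q_{E^*}(X)$ with $\delta\gamma=e_Y$ --- possible because $\gamma$ induces a permutation of $X/E$ --- yielding $\gamma\,\mathscr{L}\,e_Y$ and hence the $\mathscr{L}$-equivalence (so, by triviality of $\mathscr{R}$, the $\mathscr{H}$-equivalence) of any two elements sharing the image $Y$.
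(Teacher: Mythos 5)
Your argument is correct, but note first that this paper does not prove the statement at all: it is recalled verbatim as \cite[Corollary~3.8]{Sangkhanan}, so there is no in-paper proof to compare against. Judged on its own, your derivation is sound, and it is essentially the ``structural'' route: take the right-group decomposition $Q_{E^*}(X)=\bigcup_{e}H_e$ as given, so each $\mathscr{H}$-class is automatically a group, and reduce everything to showing that the $\mathscr{H}$-class of $\alpha$ is exactly the set of elements with image $X\alpha$. Both inclusions check out: $\beta e=\beta$ and $\beta^{-1}\beta=e$ give $X\beta=Xe$ for every $\beta\in H_e$, and your key step --- an idempotent of $Q_{E^*}(X)$ is determined by its image --- is valid; in fact it is precisely Lemma~\ref{00} of this paper (each $A\in X/E$ satisfies $A\epsilon\subseteq A$ with $|A\epsilon|=1$) combined with the cross-section property, so you could have cited that rather than re-deriving it. Two small caveats. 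First, your attribution is off: what forces $xe\in[x]_E$ is the \emph{forward} preservation $(x,y)\in E\Rightarrow(xe,ye)\in E$ applied to the representative $y\in Y\cap[x]_E$ (with $ye=y$), not the reflecting direction; the reflecting implication points the wrong way for this step. Second, be aware of the logical ordering: within the present paper it is legitimate to lean on ``$Q_{E^*}(X)$ is a right group,'' since that is recalled as established, but in the source \cite{Sangkhanan} the right-group/union-of-groups structure is derived \emph{after} (and partly from) this corollary, so as a from-scratch proof one would instead verify directly that $\{\beta:X\beta=X\alpha\}$ is closed and is a group (your ``computational alternative'' with the projection idempotent $e_Y$ and a section $\delta$ with $\delta\gamma=e_Y$ is exactly such a self-contained argument, and is the safer version to keep).
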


\begin{thm}[{\cite[Theorem 3.9]{Sangkhanan}}]
	$Q_{E^*}(X)$ is a union of symmetric groups.
\end{thm}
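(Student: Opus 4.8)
The plan is to identify each maximal subgroup of $Q_{E^*}(X)$ with the symmetric group on the index set $X/E$, and then use the fact that $Q_{E^*}(X)$ is a right group to exhibit it as the disjoint union of these subgroups.

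I would begin with two elementary observations about an arbitrary $\alpha\in Q_{E^*}(X)$. Since $|A\alpha|=1$ for each $A\in X/E$, $\alpha$ collapses every $E$-class to a single point; and since $\alpha\in T_{E^*}(X)$, points lying in distinct $E$-classes are mapped to points lying in distinct $E$-classes. Together with the requirement $A\cap X\alpha\neq\emptyset$, these two facts show that $Y:=X\alpha$ meets every $E$-class in exactly one point (so $Y$ is a cross-section of $X/E$), and that the induced map $\bar\alpha\colon X/E\to X/E$ sending a class $A$ to the class of the point $A\alpha$ is a bijection.

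The core of the argument is as follows. Fix $\alpha\in Q_{E^*}(X)$ and put $Y=X\alpha$; by \cite[Corollary 3.8]{Sangkhanan}, $H_\alpha=\{\beta\in Q_{E^*}(X):X\beta=Y\}$ is a subgroup of $Q_{E^*}(X)$, whose identity is the idempotent $e$ that fixes $Y$ pointwise and maps each $E$-class onto its representative in $Y$. I would define $\Phi\colon H_\alpha\to\mathrm{Sym}(Y)$ by $\Phi(\beta)=\beta|_Y$ and verify:
\begin{enumerate}[(i)]
	\item $\beta|_Y$ is a permutation of $Y$: it maps $Y$ into $Y$ since $Y\beta\subseteq X\beta=Y$; it is injective because distinct points of the cross-section $Y$ are $E$-inequivalent and $\beta\in T_{E^*}(X)$; and it is onto because $\beta$ carries the representative of $A$ to the representative of $\bar\beta(A)$, and $\bar\beta$ is a bijection of $X/E$.
	\item $\Phi$ is a homomorphism, since for $y\in Y$ one has $y\beta_1\in Y$, so $(\beta_1\beta_2)|_Y=(\beta_1|_Y)(\beta_2|_Y)$.
	\item $\Phi$ is injective, because $\beta$ is determined by its values on $Y$: for $x$ in the class $A$ with representative $y_A\in Y$ we have $x\beta=A\beta=y_A\beta$.
	\item $\Phi$ is surjective: given $\sigma\in\mathrm{Sym}(Y)$, the map $x\mapsto\sigma(y_A)$ (for $x\in A$) lies in $Q_{E^*}(X)$, has image $\sigma(Y)=Y$, hence belongs to $H_\alpha$ and restricts to $\sigma$ on $Y$; its membership in $T_{E^*}(X)$ is automatic, because a permutation of the cross-section $Y$ keeps distinct $E$-classes distinct.
\end{enumerate}
Thus $H_\alpha\cong\mathrm{Sym}(Y)$, and since $|Y|=|X/E|$ this is the symmetric group on $X/E$.

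Finally, $Q_{E^*}(X)$ is a right group, so by Lemma \ref{111} and the remarks following it, $Q_{E^*}(X)=\bigcup_{e\in E(Q_{E^*}(X))}H_e$ is a disjoint union of its maximal subgroups; by the previous step each $H_e$ is isomorphic to $\mathrm{Sym}(X/E)$, so $Q_{E^*}(X)$ is a union of symmetric groups. The only step requiring genuine care is the surjectivity of $\Phi$ in (iv) --- checking that an arbitrary permutation of the cross-section $Y$ extends to a member of $T_{E^*}(X)$ with image exactly $Y$ --- while (i)--(iii) are routine bookkeeping once the two opening observations are established.
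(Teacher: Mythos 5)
Your argument is correct and is essentially the argument behind the cited result: you show each $\mathscr{H}$-class $H_\alpha$ is isomorphic to the symmetric group on the cross-section $X\alpha$ (hence on $X/E$) via restriction, and then invoke the right-group decomposition $Q_{E^*}(X)=\bigcup_{e\in E(Q_{E^*}(X))}H_e$, which is exactly the content of Corollary 3.8, Theorem 3.9 and the accompanying Remark as the paper presents them. The verifications (i)--(iv), including the extension of an arbitrary permutation of the cross-section in (iv), are all sound.
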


By the proof of [{\cite[Theorem 3.9]{Sangkhanan}}], we obtain the following remark.

\begin{rem}
	Let $\alpha\in Q_{E^*}(X)$. Then $H_\alpha$ is isomorphic to the symmetric group on $X\alpha$.
\end{rem}

As a result of \cite{Clifford1}, we have a characterization of Green's $\mathcal{R}$ relations on the full transformation semigroup $T(X)$, where $X$ is an arbitrary set as follows.

\begin{thm}\label{tt}
	For each $\alpha,\beta\in T(X)$. Then $(\alpha,\beta)\in\mathscr{R}$ if and only if $\pi(\alpha)=\pi(\beta)$.
\end{thm}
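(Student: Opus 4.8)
The plan is to prove both implications directly from the monoid characterization of $\mathscr{R}$. Since $T(X)$ has an identity element (the identity map on $X$), we have $T(X)^1=T(X)$, so $(\alpha,\beta)\in\mathscr{R}$ holds precisely when there exist $\gamma,\delta\in T(X)$ with $\alpha\gamma=\beta$ and $\beta\delta=\alpha$. I will also use the observation recorded in Section~2 that $\pi(\alpha)=X/\ker(\alpha)$, so that $\pi(\alpha)=\pi(\beta)$ is equivalent to $\ker(\alpha)=\ker(\beta)$.

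For necessity, suppose $(\alpha,\beta)\in\mathscr{R}$ and choose $\gamma,\delta$ as above. If $(x,y)\in\ker(\alpha)$, then $x\beta=(x\alpha)\gamma=(y\alpha)\gamma=y\beta$, giving $\ker(\alpha)\subseteq\ker(\beta)$; applying the same argument to $\beta\delta=\alpha$ gives the reverse inclusion, hence $\ker(\alpha)=\ker(\beta)$ and $\pi(\alpha)=\pi(\beta)$. This half is completely routine.

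For sufficiency, assume $\pi(\alpha)=\pi(\beta)$, i.e. $\ker(\alpha)=\ker(\beta)$. First I would define a map $\gamma_0$ on $X\alpha$ by $(x\alpha)\gamma_0=x\beta$ for $x\in X$, and check it is well defined: if $x\alpha=y\alpha$ then $(x,y)\in\ker(\alpha)=\ker(\beta)$, so $x\beta=y\beta$. Then I would extend $\gamma_0$ to a total map $\gamma\in T(X)$ by sending every point of $X\setminus X\alpha$ to some fixed $x_0\in X$ (available since $X\neq\emptyset$); by construction $\alpha\gamma=\beta$. Interchanging the roles of $\alpha$ and $\beta$ yields $\delta\in T(X)$ with $\beta\delta=\alpha$, so $(\alpha,\beta)\in\mathscr{R}$.

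The only step needing any attention is this last construction of the witnessing transformations: one must verify well-definedness on the image and that the partial map completes to a genuine self-map of $X$, which is exactly where nonemptiness of $X$ enters. Beyond that there is no real obstacle — the statement is the standard description of $\mathscr{R}$ on $T(X)$, valid for arbitrary (not merely finite) $X$, and the argument above works without change.
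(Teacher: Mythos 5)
Your proof is correct and complete: the necessity half via $\alpha\gamma=\beta$ forcing $\ker(\alpha)\subseteq\ker(\beta)$, and the sufficiency half via the well-defined partial map $(x\alpha)\gamma_0=x\beta$ extended arbitrarily to all of $X$, is exactly the standard argument. The paper itself gives no proof of this theorem -- it is quoted from Clifford and Preston -- and your argument is precisely the classical one that source would supply, valid for arbitrary $X$ as you note.
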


Moreover, we can prove the following theorem, which extends Exercise 8 from $\S$2.2 in \cite{Clifford1}.

\begin{thm}\label{m1}
	Let $X$ be a nonempty set and $S$ a subsemigroup of $T(X)$. The following statements are equivalent.
\begin{enumerate}[(1)]
	\item $S$ is a right group.
	\item All members of $S$ have the same partition $\mathscr{P}$ of $X$ such that $X\alpha$ is a cross section of $\mathscr{P}$ for all $\alpha\in S$.
	\item $S$ is a regular subsemigroup of $Q_{E^*}(X)$ for some equivalence relation $E$ on $X$.
\end{enumerate}
\end{thm}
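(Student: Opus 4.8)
The plan is to prove the cycle of implications $(1)\Rightarrow(2)\Rightarrow(3)\Rightarrow(1)$, using Lemma \ref{111}, Theorem \ref{tt}, and the structure theory of right groups recalled above.

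For $(1)\Rightarrow(2)$ I would start from the decomposition $S=\bigcup_{e\in E(S)}H_e$, where the $H_e=Se$ are the maximal subgroups of $S$ and $E(S)$ is a right zero subsemigroup. Since $ef=f$ and $fe=e$ for all $e,f\in E(S)$, any two idempotents of $S$ are $\mathscr{R}$-related, and each $\alpha\in H_e$ is $\mathscr{H}$-related (hence $\mathscr{R}$-related) to $e$; thus $S$ is a single $\mathscr{R}$-class. As Green's $\mathscr{R}$ on $S$ is contained in Green's $\mathscr{R}$ on $T(X)$, Theorem \ref{tt} produces a common partition $\mathscr{P}:=\pi(\alpha)$ for all $\alpha\in S$. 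It then remains to see that each $X\alpha$ is a cross section of $\mathscr{P}$, and the key observation is that $X\alpha=Xe$ whenever $\alpha\in H_e$: since $e$ is the identity of the group $H_e$, the relation $\alpha e=\alpha$ gives $X\alpha=(X\alpha)e\subseteq Xe$, while writing $e=\alpha^{-1}\alpha$ with $\alpha^{-1}$ the inverse of $\alpha$ in $H_e$ gives $Xe=(X\alpha^{-1})\alpha\subseteq X\alpha$. Because the idempotent $e$ fixes $Xe$ pointwise, $Xe$ meets every block $ye^{-1}$ of $\pi(e)$ in exactly the one point $y$; hence $Xe$, and so $X\alpha$, is a cross section of $\pi(e)=\mathscr{P}$.

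For $(2)\Rightarrow(3)$ I would let $E$ be the equivalence relation on $X$ whose classes are the blocks of $\mathscr{P}$, so that $X/E=\mathscr{P}$. The inclusion $S\subseteq Q_{E^*}(X)$ is then a short verification: for $\alpha\in S$ one has $(x,y)\in E$ iff $x,y$ lie in a common block of $\pi(\alpha)$ iff $x\alpha=y\alpha$ — the only non-routine implication being $(x\alpha,y\alpha)\in E\Rightarrow x\alpha=y\alpha$, which holds because $x\alpha,y\alpha\in X\alpha$ and $X\alpha$ is a cross section — so $\alpha\in T_{E^*}(X)$; moreover $|A\alpha|=1$ for every $A\in X/E$ (as $A$ is a block of $\pi(\alpha)$), and $A\cap X\alpha\neq\emptyset$ (as $X\alpha$ is a cross section of $X/E$). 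The real work is showing that $S$ is regular. Here I would first note that $\alpha|_{X\alpha}$ is a permutation of $X\alpha$: it is injective because $X\alpha$ is a cross section of $\pi(\alpha)$, and it maps $X\alpha$ onto $X\alpha$ because $X\alpha^{2}=(X\alpha)\alpha$ is again a cross section of $\mathscr{P}$ contained in the cross section $X\alpha$, which forces $X\alpha^{2}=X\alpha$. When $X$ is finite this permutation has some finite order $m$, so $\alpha^{m+1}=\alpha$; hence $\alpha$ is idempotent when $m=1$, and $\alpha\,\alpha^{m-1}\,\alpha=\alpha$ with $\alpha^{m-1}\in S$ when $m\geq2$. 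In either case $\alpha$ is regular in $S$, and therefore $S$ is a regular subsemigroup of $Q_{E^*}(X)$.

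Finally, $(3)\Rightarrow(1)$ is immediate: $Q_{E^*}(X)$ is a right group, hence left cancellative, so its subsemigroup $S$ is left cancellative; being regular by hypothesis, $S$ is a right group by Lemma \ref{111}(3). The step I expect to be the main obstacle is the regularity claim inside $(2)\Rightarrow(3)$: the other steps are formal or one-line computations, but exhibiting an inverse of $\alpha$ inside $S$ genuinely uses that $\alpha|_{X\alpha}$ has finite order, which is where the finiteness of $X$ is invoked (for infinite $X$ this restriction may have infinite order, for example for the subsemigroup generated by a single bi-infinite cycle, so there one would need an extra periodicity hypothesis).
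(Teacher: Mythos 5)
Your argument is correct in every step it actually carries out, and it diverges from the paper in two places worth recording. For $(1)\Rightarrow(2)$ the paper also begins by observing that a right group is a single $\mathscr{R}$-class and invoking Theorem \ref{tt} to obtain the common partition, but it then establishes the cross-section property by two separate contradiction arguments (first $|P_i\cap X\alpha|\le 1$ via $x\alpha^2=y\alpha^2$, then $P_i\cap X\alpha\neq\emptyset$ via $\alpha=\alpha\beta\alpha$). Your route --- $X\alpha=Xe$ for $\alpha\in H_e$, and $Xe$ meets each block of $\pi(e)$ exactly once because an idempotent fixes its image pointwise --- proves the same thing more directly. Your $(3)\Rightarrow(1)$ coincides with the paper's, and your verification that $S\subseteq Q_{E^*}(X)$ in $(2)\Rightarrow(3)$ fills in details the paper leaves to ``the definition of $Q_{E^*}(X)$''.

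The step you flag as the obstacle is exactly where the paper is weakest. For the regularity half of $(2)\Rightarrow(3)$ the paper writes only that ``$S$ is regular by Theorem 3.1 of \cite{Deng}''; but that result characterizes which elements of $T_{E^*}(X)$ are regular \emph{in the ambient semigroup}, i.e.\ it supplies an inverse in $T_{E^*}(X)$, not in $S$. That does not make $S$ a regular semigroup, which is what statement (3) and the subsequent step $(3)\Rightarrow(1)$ require. Your instinct that finiteness is genuinely needed is right, and your bi-infinite cycle is not merely a worry but an actual counterexample to $(2)\Rightarrow(1)$: take $X=\mathbb{Z}\times\{1,2\}$ with $E$-classes $A_n=\{n\}\times\{1,2\}$ and $\alpha:(n,i)\mapsto(n+1,1)$; every power of $\alpha$ has partition $\{A_n:n\in\mathbb{Z}\}$ and image $\mathbb{Z}\times\{1\}$, a cross section, so $S=\langle\alpha\rangle$ satisfies (2), yet $S$ is the free monogenic semigroup and is neither regular nor right simple. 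So the theorem as stated needs $X$ finite, or a periodicity hypothesis on $S$ --- which is precisely what your $\alpha^{m+1}=\alpha$ argument uses. Under that hypothesis your proof of $(2)\Rightarrow(3)$ is complete and, unlike the paper's, actually exhibits the inverse inside $S$.
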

\begin{proof}
	(1) $\Rightarrow$ (2) Let $S$ be a right group. Since $\mathscr{R}$-relation on $S$ is trivial, we obtain that all members of $S$ have the same partition by Theorem \ref{tt}. For convenience, we denote such a partition of $X$ by $\mathscr{P}=\{P_i:i\in I\}$ where $I$ is an indexed set. Clearly, $|P_i\alpha|=1$ for all $i\in I$. It remains to show that $X\alpha$ is a cross section of $\mathscr{P}$ for all $\alpha\in S$.

	Let $\alpha\in S$. First, we will show that $|P_i\cap X\alpha|\leq1$ for all $i\in I$. Suppose to a contrary that $|P_0\cap X\alpha|>1$ for some $P_0\in\mathscr{P}$. Then there exist two distinct elements $a,b\in P_0\cap X\alpha$. We note that $a\alpha^{-1},b\alpha^{-1}\in\mathscr{P}$ such that $a\alpha^{-1}\neq b\alpha^{-1}$. Let $x\in a\alpha^{-1}$ and $y\in b\alpha^{-1}$. Since $|P_0\alpha|=1$, we obtain $a\alpha=b\alpha$ which implies that 
	$$
	x\alpha^2=(x\alpha)\alpha=a\alpha=b\alpha=(y\alpha)\alpha=y\alpha^2.
	$$
	Since $\alpha^2\in S$, we get $x$ and $y$ are in the same class in $\mathscr{P}$ which is a contradiction. Thus $|P_i\cap X\alpha|\leq1$ for all $i\in I$. Now, suppose that $P_1\cap X\alpha=\emptyset$ for some $P_1\in\mathscr{P}$. We note that $\alpha=\alpha\beta\alpha$ for some $\beta\in S$ since $S$ is a right group which is regular. Assume that 
	\begin{center}
		$P_1\beta_*\in P_2$ and $P_2\alpha_*\in P_3$
	\end{center}
	for some $P_2,P_3\in\mathscr{P}$. We can see that $P_3\not=P_1$ since $X\alpha\cap P_1=\emptyset$. From $\alpha=\alpha\beta\alpha$, we obtain that
	$$
	(P_3\beta_*)\alpha=(P_2\alpha_*)\beta\alpha=P_2\alpha_*\in P_3
	$$
	and
	$$
	(P_1\beta_*)\alpha=P_2\alpha_*\in P_3.
	$$
	Hence $(P_3\beta_*)\alpha=(P_1\beta_*)\alpha$ since $|P_3\cap X\alpha|\leq1$. By $\beta\alpha\in S$, it is concluded that $P_3$ and $P_1$ are the same class in $\mathscr{P}$ which is a contradiction. Therefore, $|P_i\cap X\alpha|=1$ for all $i\in I$ from which it follows that $X\alpha$ is a cross section of $\mathscr{P}$ for all $\alpha\in S$.
	
	(2) $\Rightarrow$ (3). Suppose that (2) holds. Let $E$ be an equivalence relation on $X$ induced by $\mathscr{P}=\{P_i:i\in I\}$. Then $S\subseteq Q_{E^*}(X)$ by the definition of $Q_{E^*}(X)$. We can see that $S$ is closed since $S$ is a subsemigroup of $T(X)$. Moreover, $S$ is regular by Theorem 3.1 of \cite{Deng}.

	(3) $\Rightarrow$ (1). Suppose that (3) holds. Then $S$ is regular. We can see that $S$ is left cancellative since $Q_{E^*}(X)$ is a right group. Hence $S$ is also a right group.
\end{proof}

According to \cite[Theorem 1.27]{Clifford1}, a right group $S$ is isomorphic to the direct product $G\times E$ of a group $G$ and a right zero semigroup $E$ where $E$ is the set $E(S)$ of all idempotents in $S$ and $G$ is the group $Se$ $(=H_e)$ where $e\in E(S)$.

To prove the condition for $Q_{E^*}(X)$ to be a group, we first state a result appeared in \cite{Harden}.

\begin{thm}[{\cite[Theorem 2.8]{Harden}}]\label{thm: direct product of two groups}
	The semigroup $S\times T$ is a group if and only if the semigroups $S$ and $T$ are both groups.
\end{thm}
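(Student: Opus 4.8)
The plan is to use the elementary characterization that a semigroup $G$ is a group if and only if $G$ is nonempty, possesses a two-sided identity, and every element of $G$ has a two-sided inverse relative to that identity. The sufficiency direction ($S$ and $T$ groups $\Rightarrow$ $S\times T$ a group) is the classical fact that a direct product of groups is a group: if $1_S$ and $1_T$ are the identities, then $(1_S,1_T)$ is the identity of $S\times T$ and $(s,t)^{-1}=(s^{-1},t^{-1})$, which I would record in a single line.

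For the necessity direction, suppose $S\times T$ is a group. Since a group is nonempty, both $S$ and $T$ are nonempty. Let $(e,f)$ denote the identity of $S\times T$. For every $s\in S$ and any fixed $t\in T$ we have $(e,f)(s,t)=(s,t)=(s,t)(e,f)$, which reads componentwise as $es=s=se$; as $s$ was arbitrary, $e$ is a two-sided identity of $S$, and symmetrically $f$ is a two-sided identity of $T$.

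Next I would produce inverses. Fix $s\in S$ and choose any $t\in T$ (possible since $T\neq\emptyset$). Because $S\times T$ is a group, $(s,t)$ has an inverse $(s',t')\in S\times T$ with $(s,t)(s',t')=(e,f)=(s',t')(s,t)$; projecting onto the first coordinate gives $ss'=e=s's$, so $s'$ is a two-sided inverse of $s$ in $S$. Hence $S$ is a group, and the symmetric argument (fixing an element of $S$ and varying over $T$) shows $T$ is a group.

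There is essentially no hard step here; the only points requiring care are (i) invoking nonemptiness of the other factor so that the ``test pair'' $(s,t)$ actually exists, and (ii) using that the inverse in a group is genuinely two-sided, so that the projected relations really do witness a two-sided inverse in the factor. An alternative packaging would use the solvability-of-equations criterion for a group (that $ax=b$ and $ya=b$ are each uniquely solvable): lifting such equations from $S$ to $S\times T$ and projecting back works equally well, but the identity-and-inverse route above is the shortest.
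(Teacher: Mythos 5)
Your argument is correct and complete. Note, however, that the paper itself gives no proof of this statement: it is imported verbatim as Theorem~2.8 of Harden's work and used as a black box, so there is no in-paper argument to compare yours against. Your proof is the standard elementary one — project the identity and the inverses of $S\times T$ onto each coordinate to exhibit a two-sided identity and two-sided inverses in $S$ and in $T$ — and the two points you flag as needing care (nonemptiness of the other factor so that a test pair $(s,t)$ exists, and two-sidedness of the projected inverse) are exactly the right ones; both are handled correctly. The converse direction is the classical closure of the class of groups under direct products and needs no further comment. In short, your proposal would serve as a self-contained replacement for the external citation.
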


From the above theorem, we have the following result.

\begin{prop}\label{prop: right group is a group}
	A right group $S$ is a group if and only if $E(S)$ is a trivial semigroup.
\end{prop}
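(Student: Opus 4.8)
The plan is to reduce the claim to a statement about the direct factor $E(S)$ via the structure theorem for right groups recalled just before the statement: every right group $S$ is isomorphic to $G\times E(S)$, where $G=H_e$ (for any fixed $e\in E(S)$) is a group and $E(S)$ is a right zero subsemigroup of $S$. This decomposition is exactly what makes Theorem \ref{thm: direct product of two groups} applicable.

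First I would apply Theorem \ref{thm: direct product of two groups} to the isomorphism $S\cong G\times E(S)$: the semigroup $S$ is a group if and only if both $G$ and $E(S)$ are groups. Since $G$ is a group by construction, this immediately gives that $S$ is a group if and only if $E(S)$ is a group.

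Next I would prove that a right zero semigroup is a group precisely when it is trivial, i.e.\ has exactly one element. The nontrivial direction is short: in a right zero semigroup every element is idempotent, while a group contains a unique idempotent (its identity); hence a right zero semigroup that happens to be a group can have only one element. The converse is clear, as the one-element semigroup is a (trivial) group. Chaining this with the previous equivalence yields the claim: $S$ is a group $\iff$ $E(S)$ is a group $\iff$ $E(S)$ is trivial.

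I do not anticipate a real obstacle; the only points deserving care are fixing the meaning of ``trivial semigroup'' (a one-element semigroup) and recalling the standard fact that a group has a unique idempotent. As an alternative, fully self-contained route one can avoid Theorem \ref{thm: direct product of two groups} altogether by using the disjoint decomposition $S=\bigcup_{e\in E(S)}H_e$ into maximal subgroups: if $|E(S)|=1$ then $S=H_e$ is a group, and if $S$ is a group then its unique idempotent forces $|E(S)|=1$. I would likely present the direct-product argument in the main text and mention this second argument as a remark.
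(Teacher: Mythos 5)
Your argument is correct and matches the paper's proof in all essentials: both rest on the decomposition $S\cong G\times E(S)$ together with Theorem \ref{thm: direct product of two groups}, and both use the uniqueness of the idempotent in a group to handle the direction in which $E(S)$ is forced to be trivial. The only cosmetic difference is that you run Theorem \ref{thm: direct product of two groups} as a two-way equivalence and then separately note that a right zero semigroup is a group only when it is a singleton, whereas the paper invokes the unique-idempotent fact directly on $S$ for the converse; the content is the same.
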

\begin{proof}
	Let $S$ be a right group. Assume that $E(S)$ is a trivial semigroup which is also a group. As we mentioned above, $S$ can be written as a direct product of a group $G$ and $E(S)$. By Theorem \ref{thm: direct product of two groups}, we obtain $S\cong G\times E(S)$ is a group. The converse is clear since every group has a unique idempotent.
\end{proof}

Let $E(Q_{E^*}(X))$ be the set of all idempotents in $Q_{E^*}(X)$. The following lemma is a characterization of idempotents in $Q_{E^*}(X)$.

\begin{lem}[{\cite[Lemma 4.1]{Sangkhanan}}]\label{00}
	$\alpha\in Q_{E^*}(X)$ is an idempotent if and only if  $A\alpha\subseteq A$ for all $A\in X/E$.
\end{lem}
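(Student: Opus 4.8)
The plan is to prove both implications directly from the defining conditions of $Q_{E^*}(X)$, namely that $|A\alpha|=1$ and $A\cap X\alpha\neq\emptyset$ for every $A\in X/E$. The single observation driving both directions is this: if $c\in A\cap X\alpha$ for some $A\in X/E$, and $\alpha$ is idempotent, then $c$ is a fixed point of $\alpha$; conversely, if $A\alpha\subseteq A$, then the unique value taken by $\alpha$ on $A$ is forced to be a fixed point. Since each $E$-class is collapsed to a point by $\alpha$, controlling the location of that point (inside its own class versus in another) is exactly what distinguishes idempotents.

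For the forward direction, I would assume $\alpha^2=\alpha$ and fix an arbitrary $A\in X/E$. Using $A\cap X\alpha\neq\emptyset$, choose $c\in A\cap X\alpha$ and write $c=y\alpha$ for some $y\in X$. Then $c\alpha=(y\alpha)\alpha=y\alpha^2=y\alpha=c$, so $c$ is a fixed point of $\alpha$. Because $c\in A$ and $|A\alpha|=1$, we get $A\alpha=\{c\alpha\}=\{c\}\subseteq A$. As $A$ was arbitrary, $A\alpha\subseteq A$ for all $A\in X/E$.

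For the converse, I would assume $A\alpha\subseteq A$ for every $A\in X/E$ and show $x\alpha^2=x\alpha$ for each $x\in X$. Let $A\in X/E$ be the class containing $x$. Then $x\alpha\in A\alpha\subseteq A$, and since $|A\alpha|=1$ we have $A\alpha=\{x\alpha\}$. Applying $\alpha$ once more, $x\alpha^2=(x\alpha)\alpha\in A\alpha=\{x\alpha\}$ (here we use $x\alpha\in A$), so $x\alpha^2=x\alpha$. Since $x$ was arbitrary, $\alpha^2=\alpha$, i.e.\ $\alpha\in E(Q_{E^*}(X))$.

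I do not expect any genuine obstacle: the argument is short and uses only the membership conditions in the definition of $Q_{E^*}(X)$ together with $\alpha\in T_{E^*}(X)$ (which guarantees $\alpha$ respects the partition $X/E$ so that $A\alpha$ lies in a single $E$-class). The only point that needs a moment of care is the repeated use of $|A\alpha|=1$ to pin down $A\alpha$ as the singleton $\{x\alpha\}$ (resp.\ $\{c\}$) before applying $\alpha$ a second time; once that is in place, both directions are immediate. One could alternatively phrase the forward direction using the cross-section property $|A\cap X\alpha|=1$ recalled in the introduction, but it is not needed here.
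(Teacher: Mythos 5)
Your proof is correct: both directions follow cleanly from the two membership conditions $|A\alpha|=1$ and $A\cap X\alpha\neq\emptyset$, and each use of $|A\alpha|=1$ to pin $A\alpha$ down to a singleton is justified. Note that the paper itself states this lemma without proof (it is imported from the cited reference), so there is no in-paper argument to compare against; your argument is the standard one and is what the cited source's proof amounts to.
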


We recall that the relation $I_X=\{(x,x):x\in X\}$ on a set $X$ is called the {\it identity relation}. The next theorem shows that $Q_{E^*}(X)$ is almost never a group.

\begin{thm}
	$Q_{E^*}(X)$ is a group if and only if $E=I_X$.
\end{thm}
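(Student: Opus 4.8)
The plan is to reduce everything to counting idempotents via Proposition \ref{prop: right group is a group}. Since $Q_{E^*}(X)$ is a right group, that proposition says $Q_{E^*}(X)$ is a group precisely when $E(Q_{E^*}(X))$ is a one-element (trivial) semigroup, so the statement to be proved is equivalent to: $Q_{E^*}(X)$ has a unique idempotent if and only if $E=I_X$. The idempotents are described by Lemma \ref{00}, namely $\alpha\in Q_{E^*}(X)$ is idempotent iff $A\alpha\subseteq A$ for all $A\in X/E$, and this is the tool I would use on both implications.

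For the direction $E=I_X\Rightarrow$ group: if $E=I_X$, every $E$-class is a singleton $\{x\}$. By Lemma \ref{00}, an idempotent $\alpha\in Q_{E^*}(X)$ must satisfy $\{x\}\alpha\subseteq\{x\}$ for every $x\in X$, forcing $x\alpha=x$; hence $\alpha=\id$ is the only idempotent, $E(Q_{E^*}(X))$ is trivial, and Proposition \ref{prop: right group is a group} gives that $Q_{E^*}(X)$ is a group. (One could equally observe directly that in this case $Q_{E^*}(X)$ is exactly the symmetric group on $X$.)

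For the converse I would argue by contraposition, producing two distinct idempotents when $E\neq I_X$. Pick a class $A_0\in X/E$ with $|A_0|\geq 2$, fix distinct $a,b\in A_0$, and for every other class $A$ fix a point $c_A\in A$. Define $\beta_1$ (resp. $\beta_2$) to send every element of $A_0$ to $a$ (resp. to $b$) and every element of any class $A\neq A_0$ to $c_A$. The point to verify is that each $\beta_i$ lies in $Q_{E^*}(X)$: it collapses every $E$-class to a single point and its image is a cross section of $X/E$, so $|A\beta_i|=1$ and $A\cap X\beta_i\neq\emptyset$ for all $A$; moreover $\beta_i\in T_{E^*}(X)$ because two elements have $E$-related $\beta_i$-images only when they lie in the same class. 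Lemma \ref{00} then makes both $\beta_i$ idempotent, since $A\beta_i\subseteq A$ for every $A$, and $a\beta_1=a\neq b=a\beta_2$ shows $\beta_1\neq\beta_2$. Hence $E(Q_{E^*}(X))$ is not trivial and, by Proposition \ref{prop: right group is a group}, $Q_{E^*}(X)$ is not a group.

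I do not expect a genuine obstacle here; the one thing that needs care is that $\id$ need not belong to $Q_{E^*}(X)$ (it does so only when $E=I_X$, since otherwise $|A\,\id|=|A|>1$ for some class $A$), so in the converse direction one cannot use the identity as one of the two idempotents. Both witnesses must instead be collapsing maps whose images are cross sections of $X/E$, differing only in which representative of the non-singleton class $A_0$ they choose.
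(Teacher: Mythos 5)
Your proposal is correct and follows essentially the same route as the paper: both directions reduce to the idempotent count via Proposition \ref{prop: right group is a group} and Lemma \ref{00}, and your two witnesses $\beta_1,\beta_2$ for the case $E\neq I_X$ are exactly the maps $\alpha,\beta$ constructed in the paper's proof. The only cosmetic difference is that for $E=I_X$ the paper simply observes that $Q_{E^*}(X)$ is the symmetric group on $X$, a shortcut you also note parenthetically.
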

\begin{proof}
	Let $X/E=\{A_i:i\in I\}$. Suppose that $Q_{E^*}(X)$ is a group. Then by Proposition \ref{prop: right group is a group}, $E(Q_{E^*}(X))$ is trivial. Let us assume to the contrary that $E\not=I_X$. Then there exists an equivalence class $A_0$ in $X/E$ which has more than one element. Let $x$ and $y$ be distinct elements in $A_0$. For each $j\in I\setminus\{0\}$, fix $a_j\in A_j$ and define
	$$
	\alpha=\begin{pmatrix}A_0 & A_j\\x & a_j\end{pmatrix}\ \text{and}\ \beta=\begin{pmatrix}A_0 & A_j\\y & a_j\end{pmatrix}.
	$$
	It is clearly that $\alpha,\beta\in Q_{E^*}(X)$. We can see that $\alpha$ and $\beta$ are idempotents by Lemma \ref{00}. It leads to a contradiction since $\alpha\not=\beta$. 
	
	Conversely, let $E=I_X$. Then $Q_{E^*}(X)$ is a symmetric group on the set $X$ which implies that $Q_{E^*}(X)$ is a group.
\end{proof}

%%%%%%%%%%%%%%%%%%%%%%%%%%%%%%%%Isomorphism Conditions%%%%%%%%%%%%%%%%%%%%%%%%%%%%%%%%%%%%%%%%%%%%%%%%%%%%%%%%%%%%%%%%%%%%%%%%%%%%%%%%%%%%%%%%

\section{Isomorphism Conditions}

The goal of this section is to determine a necessary and sufficient condition for the existence of an isomorphism between $Q_{E^*}(X)$ and $Q_{F^*}(Y)$, where $E$ and $F$ are equivalence relations on $X$ and $Y$, respectively.

To establish a criterion for isomorphism, we initially present a useful result in the following manner.

\begin{thm}\label{thm: right group iso}
	Let $S$ and $T$ be right groups which can be written as direct products of groups and right zero semigroups $G_1\times E_1$ and $G_2\times E_2$, respectively. Then $S\cong T$ if and only if $G_1\cong G_2$ and $|E_1|=|E_2|$.
\end{thm}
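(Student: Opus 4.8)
The plan is to treat the two implications separately, with the forward direction essentially formal and the converse relying on the structure theory for right groups recalled above.

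For the \emph{if} direction, suppose $\psi\colon G_1\to G_2$ is a group isomorphism and $\theta\colon E_1\to E_2$ is a bijection (which exists since $|E_1|=|E_2|$). Because $E_1$ and $E_2$ are right zero semigroups, \emph{every} bijection between them is automatically a semigroup isomorphism, as the product in each is given by $ab=b$. Hence $(g,e)\mapsto(\psi(g),\theta(e))$ is a bijection $G_1\times E_1\to G_2\times E_2$ respecting the coordinatewise multiplication, i.e.\ an isomorphism; composing with the given isomorphisms $S\cong G_1\times E_1$ and $T\cong G_2\times E_2$ yields $S\cong T$.

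For the \emph{only if} direction, assume $\phi\colon S\to T$ is an isomorphism. First I would pin down the idempotents: in $G_i\times E_i$ an element $(g,e)$ satisfies $(g,e)^2=(g,e)$ exactly when $g^2=g$ in the group $G_i$ (forcing $g=1_{G_i}$) and $e^2=e$ (automatic in a right zero semigroup), so $E(G_i\times E_i)=\{1_{G_i}\}\times E_i$, giving $|E(S)|=|E_1|$ and $|E(T)|=|E_2|$. Since any isomorphism carries idempotents bijectively onto idempotents, $\phi$ restricts to a bijection $E(S)\to E(T)$, whence $|E_1|=|E_2|$. To extract $G_1\cong G_2$, fix an idempotent $\varepsilon\in E(S)$; as recalled after Lemma~\ref{111}, $S\varepsilon=H_\varepsilon$ is a subgroup of $S$ with identity $\varepsilon$, and transporting through $S\cong G_1\times E_1$ identifies it with the copy $G_1\times\{e\}$, so $S\varepsilon\cong G_1$. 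Setting $\varepsilon'=\phi(\varepsilon)\in E(T)$, we have $\phi(S\varepsilon)=\phi(S)\phi(\varepsilon)=T\varepsilon'$, so $\phi$ restricts to an isomorphism $S\varepsilon\to T\varepsilon'$; since $T\varepsilon'\cong G_2$ by the same argument applied to $T$, we conclude $G_1\cong S\varepsilon\cong T\varepsilon'\cong G_2$.

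The only step demanding any care — and the closest thing to an obstacle — is the bookkeeping that identifies $E(G_i\times E_i)$ and confirms that $\phi$ genuinely restricts to an isomorphism between the chosen maximal subgroups $S\varepsilon$ and $T\varepsilon'$. Both points follow immediately once one uses that each $G_i$ is a group and that $S\varepsilon=H_\varepsilon$ is a subgroup with identity $\varepsilon$, facts already available from the discussion following Lemma~\ref{111} and from \cite[Theorem~1.27]{Clifford1}.
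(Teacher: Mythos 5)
Your proposal is correct and follows essentially the same route as the paper's proof: the group part is recovered as $G_1\cong S\varepsilon\cong T(\varepsilon\phi)\cong G_2$ and the right-zero part via the bijection that $\phi$ induces between idempotent sets, with the converse assembled coordinatewise. You simply spell out the details (the identification $E(G_i\times E_i)=\{1_{G_i}\}\times E_i$ and the verification that $\phi$ restricts to an isomorphism $S\varepsilon\to T(\varepsilon\phi)$) that the paper leaves as ``straightforward'' or ``clear.''
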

\begin{proof}
	Assume that $S\cong T$ via an isomorphism $\phi:S\to T$. As we mentioned above, $G_1\cong Se$ for some idempotent $e\in E(S)$. It is straightforward to verify that $Se\cong T(e\phi)$. Hence $G_1\cong Se\cong T(e\phi)=Tf\cong G_2$ for some $f=e\phi\in E(T)$. Moreover, $E_1\cong E(S)\cong E(T)\cong E_2$ which implies $|E_1|=|E_2|$ since any two right zero semigroups of the same cardinality are isomorphic. The converse is clear.
\end{proof}

We represent the symmetric group on a nonempty set $X$ as $S_X$. For an indexed collection of sets $\{A_i\}_{i\in I}$, we denote their product as $\prod\limits_{i\in I}A_i$, and an element in $\prod\limits_{i\in I}A_i$ with $i$-coordinate $a_i$ is designated as $(a_i)_{i\in I}$. The expression $\prod\limits_{i\in I}|A_i|$ denotes the product of the cardinal numbers of $A_i$ for all $i\in I$. It is a well-established fact that $\prod\limits_{i\in I}|A_i|=\left|\prod\limits_{i\in I}A_i\right|$.

By a combination of \cite[Exercise 10, pp. 40 and Exercise 8, pp. 151]{Dummit}, we obtain an isomorphism condition for two symmetric groups as follows.

\begin{lem}\label{32}
	Let $A$ and $B$ be any nonempty sets. Then
\begin{center}
	$S_A\cong S_B$ if and only if $|A|=|B|$.
\end{center}
\end{lem}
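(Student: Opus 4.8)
The plan is to prove the two implications separately. The forward direction, $|A|=|B|\Rightarrow S_A\cong S_B$, is a routine transport of structure: fix a bijection $f\colon A\to B$ and define $\Phi\colon S_A\to S_B$ by $\sigma\mapsto f^{-1}\sigma f$ (composed as functions, in the convention used throughout the paper). One checks directly that $\Phi$ is a homomorphism, sends $\id_A$ to $\id_B$, and admits $\tau\mapsto f\tau f^{-1}$ as a two-sided inverse; hence $\Phi$ is an isomorphism. Nothing here is delicate.

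For the converse, $S_A\cong S_B\Rightarrow|A|=|B|$, first observe that $S_A$ is finite if and only if $A$ is finite (if $A$ is finite then $|S_A|=|A|!$, while if $A$ is infinite then $S_A$ already contains infinitely many transpositions), so any isomorphism forces $A$ and $B$ to be simultaneously finite or simultaneously infinite. In the finite case we are done at once: $|A|!=|S_A|=|S_B|=|B|!$ and $n\mapsto n!$ is injective on the nonnegative integers, so $|A|=|B|$. In the infinite case a counting argument on the group alone is not available — in ZFC one may have $2^{|A|}=2^{|B|}$ with $|A|\ne|B|$ — so instead I would recover $|A|$ from $S_A$ as an isomorphism invariant, using the classical description of the normal subgroups of an infinite symmetric group: the trivial subgroup, the finitary alternating group, the finitary symmetric group, the groups of permutations moving fewer than $\kappa$ points for each infinite cardinal $\kappa\le|A|$, and $S_A$ itself. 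The order type of this chain pins down $|A|$, which is precisely what the results quoted from \cite{Dummit} supply.

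The main obstacle is the infinite case of the converse: unlike the finite case it is not a mere cardinality comparison and rests on the (nontrivial) classification of normal subgroups of the full symmetric group. If the lemma is only ever invoked for finite $A$ and $B$ — which is all the later sections require — then the converse collapses to the factorial computation above and the proof is completely elementary.
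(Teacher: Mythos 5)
The paper does not actually prove this lemma: it is quoted as a combination of two exercises from \cite{Dummit}, so your write-up supplies an argument where the paper has only a citation. Your proof is essentially correct and follows the standard route. The forward direction by conjugation along a bijection is fine. For the converse, the finite case via $|A|!=|B|!$ works (note that the factorial is injective on the \emph{positive} integers, not the nonnegative ones, since $0!=1!$; nonemptiness of $A$ and $B$ is what saves you here). For the infinite case, your appeal to the Baer classification of normal subgroups of $S_A$ does recover $|A|$: the chain of subgroups $S_\kappa(A)$ above the finitary symmetric group is order-isomorphic to the set of infinite cardinals $\le|A|$, which determines $|A|$. A slightly cleaner invariant, avoiding the full chain, is that for infinite $A$ the finitary alternating group is the unique minimal nontrivial normal subgroup of $S_A$ and has cardinality exactly $|A|$ (the set of finite subsets of an infinite set $A$ has cardinality $|A|$), so any isomorphism $S_A\cong S_B$ immediately gives $|A|=|B|$. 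One correction to your closing remark: it is not true that the later sections only need the finite case. The isomorphism theorem of Section~3 characterizing $Q_{E^*}(X)\cong Q_{F^*}(Y)$ is stated for arbitrary nonempty sets $X$ and $Y$, and it invokes this lemma to conclude $|X/E|=|Y/F|$ from $S_{X/E}\cong S_{Y/F}$; since $X/E$ may be infinite, the infinite case of the converse is genuinely used and cannot be discarded.
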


To characterize an isomorphism condition, the following lemma is needed.
\begin{lem}\label{01}
	Let $X/E=\{A_i:i\in I\}$. Then $|E(Q_{E^*}(X))|=\prod\limits_{i\in I}|A_i|$.
\end{lem}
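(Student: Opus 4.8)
The plan is to exhibit an explicit bijection between $E(Q_{E^*}(X))$ and the Cartesian product $\prod_{i\in I}A_i$, and then invoke the standard identity $\left|\prod_{i\in I}A_i\right|=\prod_{i\in I}|A_i|$ recalled just before the statement.

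First I would use Lemma \ref{00} together with the definition of $Q_{E^*}(X)$ to pin down the shape of an idempotent. If $\alpha\in E(Q_{E^*}(X))$, then $|A_i\alpha|=1$ for every $i\in I$ because $\alpha\in Q_{E^*}(X)$, and $A_i\alpha\subseteq A_i$ because $\alpha$ is idempotent; hence $\alpha$ collapses each class $A_i$ to a single point $a_i\in A_i$. This lets me define $\Phi\colon E(Q_{E^*}(X))\to\prod_{i\in I}A_i$ by $\Phi(\alpha)=(a_i)_{i\in I}$, where $\{a_i\}=A_i\alpha$.

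Next I would construct the inverse. Given $(a_i)_{i\in I}\in\prod_{i\in I}A_i$, define $\alpha\in T(X)$ by setting $x\alpha=a_i$ whenever $x\in A_i$. The checks are routine: (i) $\alpha\in T_{E^*}(X)$, since $(x,y)\in E$ means $x,y$ share a class $A_i$, so $x\alpha=y\alpha=a_i$ and $(x\alpha,y\alpha)\in E$, while conversely if $(x\alpha,y\alpha)\in E$ with $x\in A_i$ and $y\in A_j$, then $a_i\in A_i$ and $a_j\in A_j$ are $E$-related, forcing $A_i=A_j$ and hence $(x,y)\in E$; (ii) $A_i\alpha=\{a_i\}$ has one element and $a_i\in A_i\cap X\alpha$, so $\alpha\in Q_{E^*}(X)$; (iii) $A_i\alpha=\{a_i\}\subseteq A_i$, so $\alpha$ is idempotent by Lemma \ref{00}. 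One then observes directly that $(a_i)_{i\in I}\mapsto\alpha$ is a two-sided inverse of $\Phi$, so $\Phi$ is a bijection and the cardinality count follows.

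There is no genuinely hard step here; the only point requiring care is checking the double-direction condition $(x,y)\in E\Leftrightarrow(x\alpha,y\alpha)\in E$ rather than merely the forward implication, as this is precisely what keeps the constructed map inside $T_{E^*}(X)$ and not just in $T_E(X)$. Everything else is bookkeeping.
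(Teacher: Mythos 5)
Your proof is correct and follows essentially the same route as the paper: both exhibit the bijection $\alpha\mapsto(a_i)_{i\in I}$ between $E(Q_{E^*}(X))$ and $\prod_{i\in I}A_i$ and then use $\left|\prod_{i\in I}A_i\right|=\prod_{i\in I}|A_i|$. If anything, you are slightly more thorough than the paper in the surjectivity step, where you explicitly verify that the map built from a tuple $(a_i)_{i\in I}$ satisfies the double-direction condition and so lies in $Q_{E^*}(X)$; the paper takes this membership for granted.
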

\begin{proof}
	For each $\epsilon\in E(Q_{E^*}(X))$, we have $A_i\epsilon\subseteq A_i$ for all $i\in I$ by Lemma \ref{00}. So we can write
	$$
	\epsilon=\begin{pmatrix}A_i\\a_i\end{pmatrix}\begin{text}{~where~$a_i\in A_i$.}\end{text}
	$$
	Define a function $\varphi:E(Q_{E^*}(X))\rightarrow\prod\limits_{i\in I}A_i$ by
	$$
	\epsilon\varphi=\left(A_i\epsilon_*\right)_{i\in I}=(a_i)_{i\in I}\in\prod_{i\in I}A_i.
	$$
	To show that $\varphi$ is an injection, let $\epsilon_1=\begin{pmatrix}A_i\\a_i\end{pmatrix}$ and $\epsilon_2=\begin{pmatrix}A_i\\b_i\end{pmatrix}$ in $E(Q_{E^*}(X))$ such that $\epsilon_1\varphi=\epsilon_2\varphi$. Then $a_i=A_i{\epsilon_1}_*=A_i{\epsilon_2}_*=b_i$ for all $i\in I$. Hence $\epsilon_1=\epsilon_2$ which implies that $\varphi$ is an injection. Finally, let $(a_i)_{i\in I}\in\prod\limits_{i\in I}A_i$. Define $\epsilon\in E(Q_{E^*}(X))$ by $\epsilon=\begin{pmatrix}A_i\\a_i\end{pmatrix}$. We obtain $(a_i)_{i\in I}=\left(A_i\epsilon_*\right)_{i\in I}=\epsilon\varphi$. Hence $\varphi$ is a surjective map, implying that $\varphi$ is also a bijection. Therefore, $|E(Q_{E^*}(X))|=\left|\prod\limits_{i\in I}A_i\right|=\prod\limits_{i\in I}|A_i|$.
\end{proof}

As we mentioned before, a right group $S$ is isomorphic to the direct product $G\times E$ of a group $G$ and a right zero semigroup $E$ where $E$ is the set $E(S)$ of all idempotents in $S$ and $G$ is the group $Se$ $(=H_e)$ where $e\in E(S)$. By \cite[Theorem 3.8]{Sangkhanan}, the author proved that the subgroup $H_\alpha$ of $Q_{E^*}(X)$ is isomorphic to the symmetric group on the set $X\alpha$. Since $X\alpha$ is a cross section of the partition $X/E$, it is obvious that the symmetric group on the set $X\alpha$ is isomorphic to the symmetric group on $X/E$. To sum it up, we state the useful proposition as follows.

\begin{prop}\label{prop: Q iso direct product}
	$Q_{E^*}(X)$ is isomorphic to the direct product of $S_{X/E}$ and $E(Q_{E^*}(X))$.
\end{prop}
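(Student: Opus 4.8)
The plan is to assemble the classical structure theorem for right groups with the identification of the maximal subgroups of $Q_{E^*}(X)$ recorded above. Recall that $Q_{E^*}(X)$ is a right group (it is the kernel of $\reg(T)$), so by \cite[Theorem 1.27]{Clifford1} it is isomorphic to the direct product $H_e\times E(Q_{E^*}(X))$, where $e$ is any idempotent of $Q_{E^*}(X)$, $H_e=Q_{E^*}(X)e$ is a group, and $E(Q_{E^*}(X))$ is a right zero semigroup. Such an idempotent exists: writing $X/E=\{A_i:i\in I\}$ and choosing a transversal $\{a_i:i\in I\}$, the map sending every element of $A_i$ to $a_i$ lies in $E(Q_{E^*}(X))$ by Lemma \ref{00}. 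Fix one such idempotent $\epsilon$.

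It then remains only to identify the group factor $H_\epsilon$ with $S_{X/E}$. By the remark above (equivalently, \cite[Theorem 3.8]{Sangkhanan}), $H_\epsilon$ is isomorphic to the symmetric group $S_{X\epsilon}$ on the range $X\epsilon$. Since $\epsilon\in Q_{E^*}(X)$, the range $X\epsilon$ is a cross section of $X/E$: it meets every $E$-class in exactly one point, so the assignment of each $A\in X/E$ to the unique element of $X\epsilon\cap A$ is a bijection from $X/E$ onto $X\epsilon$, whence $|X\epsilon|=|X/E|$. Lemma \ref{32} now gives $S_{X\epsilon}\cong S_{X/E}$, so $H_\epsilon\cong S_{X/E}$. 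Substituting into the decomposition from \cite[Theorem 1.27]{Clifford1} — or, if one prefers, invoking Theorem \ref{thm: right group iso} together with the evident equality of the two right zero factors — yields
$$
Q_{E^*}(X)\cong H_\epsilon\times E(Q_{E^*}(X))\cong S_{X/E}\times E(Q_{E^*}(X)),
$$
which is the assertion.

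I do not expect a genuine obstacle here; the proof is essentially a bookkeeping assembly of previously established facts, and one might even phrase it as a corollary of Theorem \ref{thm: right group iso} and the remark above. The only points that deserve a moment's attention are: (i) that \cite[Theorem 1.27]{Clifford1} supplies the group factor as the concrete maximal subgroup $H_\epsilon$ rather than merely some abstract isomorphic copy — but this is exactly its content; and (ii) that the cardinality equality $|X\epsilon|=|X/E|$ is valid for arbitrary (possibly infinite) $X$, which follows at once from the cross-section property and is what allows Lemma \ref{32} to be applied without any finiteness hypothesis.
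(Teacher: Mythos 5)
Your proof is correct and follows essentially the same route as the paper: apply the structure theorem \cite[Theorem 1.27]{Clifford1} for right groups to get $Q_{E^*}(X)\cong H_\epsilon\times E(Q_{E^*}(X))$, then identify $H_\epsilon$ with $S_{X/E}$ via the isomorphism $H_\epsilon\cong S_{X\epsilon}$ and the fact that the range $X\epsilon$ is a cross section of $X/E$. Your added care about the infinite case and the bijection $X/E\to X\epsilon$ is a small but welcome refinement of the paper's ``it is obvious'' step.
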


Now, we obtain an isomorphism condition between two right groups $Q_{E^*}(X)$ and $Q_{F^*}(Y)$ as follows.

\begin{thm}
	Let $E$ and $F$ be equivalence relations on nonempty sets $X$ and $Y$, respectively. Then $Q_{E^*}(X)\cong Q_{F^*}(Y)$ if and only if $|X/E|=|Y/F|$ and $\prod\limits_{A\in X/E}|A|=\prod\limits_{B\in Y/F}|B|$.
\end{thm}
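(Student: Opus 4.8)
The plan is to assemble the theorem directly from the machinery already in place, since essentially all the work has been done. First I would invoke Proposition \ref{prop: Q iso direct product} to write $Q_{E^*}(X)\cong S_{X/E}\times E(Q_{E^*}(X))$ and $Q_{F^*}(Y)\cong S_{Y/F}\times E(Q_{F^*}(Y))$, noting that in each case the second factor is a right zero semigroup (being the idempotent set of a right group, by Lemma \ref{111}). Thus both $Q_{E^*}(X)$ and $Q_{F^*}(Y)$ are presented as direct products of a group and a right zero semigroup, exactly the hypothesis of Theorem \ref{thm: right group iso}.

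Next I would apply Theorem \ref{thm: right group iso}: $Q_{E^*}(X)\cong Q_{F^*}(Y)$ if and only if $S_{X/E}\cong S_{Y/F}$ and $|E(Q_{E^*}(X))|=|E(Q_{F^*}(Y))|$. For the first condition, Lemma \ref{32} gives $S_{X/E}\cong S_{Y/F}$ if and only if $|X/E|=|Y/F|$. For the second condition, Lemma \ref{01} identifies $|E(Q_{E^*}(X))|=\prod_{A\in X/E}|A|$ and likewise $|E(Q_{F^*}(Y))|=\prod_{B\in Y/F}|B|$, so the equality of idempotent counts is precisely $\prod_{A\in X/E}|A|=\prod_{B\in Y/F}|B|$. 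Chaining these equivalences yields the stated biconditional.

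Honestly there is no serious obstacle here; the only point requiring a moment's care is making sure the direct-product decompositions really do put us in the setting of Theorem \ref{thm: right group iso} (i.e.\ that the idempotent factor is genuinely a right zero semigroup and that the group factor is, up to isomorphism, the symmetric group $S_{X/E}$ rather than $S_{X\alpha}$ for a particular cross section $X\alpha$). Both facts are supplied by the discussion preceding Proposition \ref{prop: Q iso direct product}, where $H_\alpha\cong S_{X\alpha}\cong S_{X/E}$ because $X\alpha$ is a cross section of $X/E$. I would state the proof in just a few lines, citing Propositions and Lemmas rather than reproving anything.

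\begin{proof}
By Proposition \ref{prop: Q iso direct product}, $Q_{E^*}(X)\cong S_{X/E}\times E(Q_{E^*}(X))$ and $Q_{F^*}(Y)\cong S_{Y/F}\times E(Q_{F^*}(Y))$, where $E(Q_{E^*}(X))$ and $E(Q_{F^*}(Y))$ are right zero semigroups. Applying Theorem \ref{thm: right group iso} to these decompositions, $Q_{E^*}(X)\cong Q_{F^*}(Y)$ if and only if $S_{X/E}\cong S_{Y/F}$ and $|E(Q_{E^*}(X))|=|E(Q_{F^*}(Y))|$. By Lemma \ref{32}, $S_{X/E}\cong S_{Y/F}$ if and only if $|X/E|=|Y/F|$, and by Lemma \ref{01}, $|E(Q_{E^*}(X))|=\prod_{A\in X/E}|A|$ and $|E(Q_{F^*}(Y))|=\prod_{B\in Y/F}|B|$. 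Combining these equivalences, $Q_{E^*}(X)\cong Q_{F^*}(Y)$ if and only if $|X/E|=|Y/F|$ and $\prod_{A\in X/E}|A|=\prod_{B\in Y/F}|B|$.
\end{proof}
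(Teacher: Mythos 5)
Your proof is correct and follows essentially the same route as the paper's: decompose via Proposition \ref{prop: Q iso direct product}, apply Theorem \ref{thm: right group iso}, and then translate the two resulting conditions using Lemma \ref{32} and Lemma \ref{01}. The paper presents the two directions of the biconditional separately, but the content is identical to your chained-equivalence version.
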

\begin{proof}
	 Assume that $Q_{E^*}(X)\cong Q_{F^*}(Y)$. By Theorem \ref{thm: right group iso} and Proposition \ref{prop: Q iso direct product}, we obtain $S_{X/E}\cong S_{Y/F}$ which implies by Lemma \ref{32} that $|X/E|=|Y/F|$. Moreover, we get $|E(Q_{E^*}(X))|=|E(Q_{F^*}(Y))|$ which follows by Lemma \ref{01} that $\prod\limits_{A\in X/E}|A|=\prod\limits_{B\in Y/F}|B|$.

	Conversely, suppose that $|X/E|=|Y/F|$ and $\prod\limits_{A\in X/E}|A|=\prod\limits_{B\in Y/F}|B|$. By Lemma \ref{32}, we have $S_{X/E}\cong S_{Y/F}$. Moreover, $|E(Q_{E^*}(X))|=|E(Q_{F^*}(Y))|$ by Lemma \ref{01}. Since any two right zero semigroups of the same cardinality are isomorphic, we obtain $E(Q_{E^*}(X))\cong E(Q_{F^*}(Y))$. Therefore, again by Theorem \ref{thm: right group iso} and Proposition \ref{prop: Q iso direct product}, $Q_{E^*}(X)\cong Q_{F^*}(Y)$.
\end{proof}

We conclude this section by calculating the cardinality of the set $Q_{E^*}(X)$. Let $X$ be a finite set and $E$ an equivalence relation on $X$ such that $X/E=\{A_1,A_2,\ldots,A_n\}$ and $|A_1||A_2|\cdots|A_n|=m$. By Proposition \ref{prop: Q iso direct product} and Lemma \ref{01}, we have
$$
|Q_{E^*}(X)|=|S_{X/E}||E(Q_{E^*}(X))|=n!\cdot m.
$$

%%%%%%%%%%%%%%%%%%%%%%%%%%Ranks%%%%%%%%%%%%%%%%%%%%%%%%%%%%%%%%%%%%%%%%%%%%%%%%%%%%%%%%%%%%%%%%%%%%%%%%%%%%%%%%%%%%%%%%%%%%%%

\section{Ranks}
	
In this section, we will find a generating set and compute the rank of $Q_{E^*}(X)$. Recall that the rank of a semigroup $S$, represented as $\rank(S)$, can indicate the minimum number of elements needed to generate $S$, that is
$$
\rank(S)=\min\{|X|:X\subseteq S,\langle X\rangle =S\}.
$$
To compute the rank of $Q_{E^*}(X)$, we consider a generating set of any right groups. The following lemmas will be necessary in order to prove the main theorem of this section.

\begin{lem}\label{ge} Let $G$ be a generating set of a right group $S$. Then $G\cap H_e$ is nonempty for all idempotents $e$ in $S$.
\end{lem}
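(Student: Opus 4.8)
The plan is to lean on the structure theory of right groups recalled above. By \cite[Theorem 1.27]{Clifford1} we may identify $S$ with a direct product $G_1\times E(S)$ in which $E(S)$ is a right zero semigroup and $G_1=Se_0$ for some fixed idempotent $e_0\in E(S)$; here $(g,e)(h,f)=(gh,f)$, the idempotents of $S$ are exactly the pairs $(1_{G_1},e)$ with $e\in E(S)$, and the $\mathscr{H}$-class of $(1_{G_1},e)$ is $G_1\times\{e\}$. The single feature of this description I need is that the projection onto the second coordinate, $\psi\colon S\to E(S)$, $(g,e)\mapsto e$, is a surjective homomorphism onto a right zero semigroup; consequently $\psi(s_1s_2\cdots s_k)=\psi(s_1)\psi(s_2)\cdots\psi(s_k)=\psi(s_k)$ for all $s_1,\dots,s_k\in S$, and $\psi^{-1}(\psi(\epsilon))=H_\epsilon$ for every idempotent $\epsilon\in S$.

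With this in hand the proof is short. First I would fix an idempotent $e\in S$ and, using that $G$ generates $S$, write $e=g_1g_2\cdots g_k$ with every $g_i\in G$. Applying $\psi$ and using that $E(S)$ is right zero gives $\psi(e)=\psi(g_k)$, whence $g_k\in\psi^{-1}(\psi(e))=H_e$. Therefore $g_k\in G\cap H_e$, so this intersection is nonempty, as required.

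I do not expect a genuine obstacle here: the entire content is the observation that, $E(S)$ being a right zero semigroup, the $E(S)$-component of a product equals that of its last factor, so a word over $G$ lies in the same $\mathscr{H}$-class as its last letter. Should one prefer to bypass the explicit direct product decomposition, the same step is available intrinsically: writing $f$ for the idempotent with $g_k\in H_f$, one has $(g_1\cdots g_k)f=g_1\cdots(g_kf)=g_1\cdots g_k$; since the map $y\mapsto yf$ carries each group $H_{e'}$ isomorphically onto $H_f$ (as recalled from \cite{Clifford1}) while distinct $\mathscr{H}$-classes are disjoint, the word $g_1\cdots g_k$ must already lie in $H_f$, and taking the word to be the idempotent $e$ forces $e=f$ and hence $g_k\in H_e$. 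Either route yields the claim.
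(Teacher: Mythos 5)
Your proof is correct. Your primary route packages the argument via the Clifford--Preston decomposition $S\cong G_1\times E(S)$ and the projection homomorphism onto the right zero semigroup $E(S)$, so that the $E(S)$-coordinate of any word equals that of its last letter; this cleanly gives $\psi(e)=\psi(g_k)$ and hence $g_k\in H_e$. The paper instead argues intrinsically, without invoking the direct product: it takes $f$ to be the idempotent with $g_m\in H_f$, notes $g_mf=g_m$ because $f$ is the identity of the group $H_f$, deduces $e=g_1\cdots g_mf=ef$, and then uses that $E(S)$ is a right zero subsemigroup to get $ef=f$, so $e=f$. Your fallback ``intrinsic'' argument at the end is essentially this same proof, differing only in how one concludes $e=f$ (you observe that $y\mapsto yf$ lands in $H_f$ and that a group has a unique idempotent, rather than computing $ef=f$ directly). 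The two routes buy the same thing; yours trades a small amount of self-containedness for the convenience of reading the claim off a coordinate, while the paper's avoids any appeal to the structure theorem.
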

\begin{proof}
	 Let $e\in E(S)$. Then there are $g_1,g_2,\ldots,g_m\in G$ such that $g_1g_2\cdots g_m=e$. Let $g_m\in H_f$ for some idempotent $f$ in $S$. Then $e=g_1g_2\cdots g_mf=ef=f$ since $f$ is the identity in $H_f$ and $E(S)$ is right zero. Hence $g_m\in G\cap H_e\neq\emptyset$.
\end{proof}

By the above lemma, any generating set $G$ of a right group $S$ contains at least one element in each $\mathcal{H}$-class. Hence $\rank(S)\geq|E(S)|$.

At this point, we have the capability to acquire a generating set for any right group.

\begin{thm}\label{gen}
	Let $S$ be a right group and $e\in E(S)$. If $G$ is a generating set of the group $H_e$, then $G\cup E(S)$ is a generating set of $S$.
\end{thm}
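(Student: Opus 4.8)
The plan is to verify directly that every element of $S$ lies in the subsemigroup generated by $G\cup E(S)$; the reverse inclusion $\langle G\cup E(S)\rangle\subseteq S$ is automatic since $S$ is closed under products. I would lean on three facts already recorded above: $S$ is the disjoint union $\bigcup_{f\in E(S)}H_f$ with each $H_f=Sf$ a group whose identity is $f$; the set $E(S)$ is a right zero subsemigroup, so $ef=f$ for all $e,f\in E(S)$; and, by hypothesis, $H_e=\langle G\rangle$.

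First I would fix an arbitrary $s\in S$ and locate its $\mathscr H$-class: there is $f\in E(S)$ with $s\in Sf$, and then $sf=s$ (writing $s=tf$ gives $sf=tff=tf=s$). Next I would transport $s$ into the distinguished group $H_e$ by right multiplication: since $se\in Se=H_e$ and $G$ generates $H_e$, I can write $se=g_1g_2\cdots g_k$ with each $g_i\in G$. Finally I would recover $s$ by right-multiplying by the idempotent $f$, using associativity together with $ef=f$:
$$
g_1g_2\cdots g_kf=(se)f=s(ef)=sf=s .
$$
Since $f\in E(S)$, this exhibits $s$ as a product of elements of $G\cup E(S)$. As $s$ was arbitrary, $S\subseteq\langle G\cup E(S)\rangle$, completing the proof.

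There is essentially no obstacle beyond bookkeeping; the only point requiring a moment's care is the ``round trip'' $s\mapsto se\mapsto (se)f$ — that passing into $H_e$ and then multiplying back by $f$ returns exactly $s$ — which is precisely where the right zero identity $ef=f$ enters, in the form $s(ef)=sf$. If one prefers a more structural phrasing, the same argument runs through the decomposition $S\cong H_e\times E(S)$ of \cite[Theorem 1.27]{Clifford1}: write $s$ as a pair $(h,f)$ with $h\in H_e$ and $f\in E(S)$, express $h$ as a product of elements of $G$, and append the factor $f$. The direct computation above, however, is shorter and uses nothing not already available.
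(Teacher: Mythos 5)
Your proof is correct and follows essentially the same route as the paper's: pass from $s$ to $se\in Se=H_e$, factor $se$ over $G$, and recover $s$ via $s=sf=s(ef)=(se)f$ using that $E(S)$ is right zero. The only difference is that you spell out the identities $sf=s$ and $ef=f$ that the paper uses implicitly.
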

\begin{proof}
	Assume that $G$ is a generating set of the group $H_e$. Let $x\in S$. Then $x\in H_f$ for some idempotent $f$ in $S$. We have $xe\in Se=H_e$ and $xe=g_1g_2\cdots g_m$ for some $g_1,g_2,\ldots,g_m\in G$. Hence $x=xf=xef=g_1g_2\cdots g_mf$ which implies that $x\in\langle G\cup E(S)\rangle$.
\end{proof}

\begin{thm}\label{thm: rank of right group}
	Let $S$ be a right group and $e\in E(S)$. If $G$ is a minimal generating set of the group $H_e$, then $\rank(S)=\max\{|G|,|E(S)|\}$.
\end{thm}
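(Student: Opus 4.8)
The plan is to establish the two inequalities $\rank(S)\ge\max\{|G|,|E(S)|\}$ and $\rank(S)\le\max\{|G|,|E(S)|\}$; throughout I read ``minimal generating set'' as one of minimum cardinality, so that $|G|=\rank(H_e)$. For the lower bound, two ingredients are already at hand. The remark following Lemma~\ref{ge} gives $\rank(S)\ge|E(S)|$, since any generating set must meet each of the pairwise disjoint $\mathcal H$-classes $H_f$, $f\in E(S)$. For the bound $\rank(S)\ge|G|$, I will use the structure theorem for right groups quoted before Theorem~\ref{thm: direct product of two groups}: $S$ is isomorphic to $H_e\times E(S)$, and composing this isomorphism with the projection onto the first coordinate yields a surjective homomorphism $\theta\colon S\to H_e$ (alternatively one checks directly that $x\mapsto xe$ is such a homomorphism). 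A homomorphic image of a generating set generates the image, so if $Y$ generates $S$ then $\theta(Y)$ generates $H_e$, whence $|Y|\ge|\theta(Y)|\ge\rank(H_e)=|G|$. Thus $\rank(S)\ge\max\{|G|,|E(S)|\}$.

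For the upper bound I will exhibit a generating set of cardinality $\max\{|G|,|E(S)|\}$. If at least one of $|G|,|E(S)|$ is infinite this is immediate: $G\cup E(S)$ generates $S$ by Theorem~\ref{gen} and has cardinality at most $|G|+|E(S)|=\max\{|G|,|E(S)|\}$. Assume then that both are finite, write $G=\{g_1,\dots,g_m\}$ and $E(S)=\{e_1,\dots,e_n\}$ with $e_1=e$, and set $r=\max\{m,n\}$. The idea is to \emph{merge} a generator of $H_e$ with an idempotent into a single element: take $y_k=g_ke_k$ for $1\le k\le\min\{m,n\}$, and fill the remaining $|m-n|$ slots with the leftover $g_k$'s (when $m>n$) or the leftover $e_k$'s (when $n>m$); let $Y$ be the resulting set, so $|Y|=r$. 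By Theorem~\ref{gen} it is enough to show $G\cup E(S)\subseteq\langle Y\rangle$. Each merged element $g_ke_k$ is the image of $g_k$ under the isomorphism $H_e\to H_{e_k}$, $x\mapsto xe_k$, recalled after Lemma~\ref{111}, so it has finite order in the group $H_{e_k}$ and a suitable power of it equals the identity $e_k$ of $H_{e_k}$; hence $e_1,\dots,e_n$ all lie in $\langle Y\rangle$ (those not among the $y_k$ were placed in $Y$ directly). In particular $e=e_1\in\langle Y\rangle$, and since $E(S)$ is a right zero semigroup and $g_k\in H_e=Se$ we get $g_ke_k\cdot e=g_ke=g_k$, so $g_1,\dots,g_m\in\langle Y\rangle$ as well. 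Therefore $\langle Y\rangle\supseteq G\cup E(S)$, so $\langle Y\rangle=S$ and $\rank(S)\le r$, which together with the lower bound gives the claim.

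The main obstacle is the finite-case upper bound: one must arrange the merged set to have \emph{exactly} $\max\{|G|,|E(S)|\}$ elements and yet still recover every idempotent of $S$ --- in particular the distinguished idempotent $e$ --- from which the original generators of $H_e$ return for free via $g_ke_k\cdot e=g_k$. Finiteness of $H_e$ (hence of every $H_{e_k}$) is used precisely here, through the fact that an element of a finite group generates a cyclic subgroup containing the identity; the naive merged set need not generate $S$ without it. The remaining case analysis according to whether $m<n$, $m=n$, or $m>n$ is routine bookkeeping and can be treated uniformly as above.
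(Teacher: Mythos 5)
Your overall strategy is the same as the paper's: both proofs get the lower bound from Lemma~\ref{ge} plus the fact that a generating set of $S$ must project onto a generating set of $H_e$, and both get the upper bound by merging the generators of $H_e$ with the idempotents inside the direct product $H_e\times E(S)$ (the paper does this via an injection or surjection $\phi\colon G\to E(S)$ and the set $\{(g,g\phi):g\in G\}$, padded with $(e,f)$ for the unmatched idempotents --- up to the isomorphism $S\cong H_e\times E(S)$ this is exactly your set $Y$). Your treatment of the lower bound is actually more explicit than the paper's, which asserts $\rank(S)\ge|G|$ without comment in the case $|G|>|E(S)|$.

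There is, however, a genuine gap in your verification that $Y$ generates $S$. You recover each idempotent $e_k$ as a power of the merged element $g_ke_k$, and you state that finiteness of $H_e$ is used ``precisely here.'' But the theorem does not assume $S$ or $H_e$ finite, and finiteness of $|G|$ and $|E(S)|$ does not force $H_e$ to be finite: for instance $H_e\cong\mathbb{Z}$ is generated as a semigroup by the two-element set $\{1,-1\}$, so one can have $|G|=2$ and $|E(S)|=2$ with every $H_{e_k}$ torsion-free, in which case no power of $g_ke_k$ equals $e_k$ and your recovery of the idempotents collapses. The paper avoids ever having to produce an idempotent in isolation: to express an arbitrary $(x,h)$ with $h=g\phi$ it writes $xg^{-1}$ as a nonempty word $g_1\cdots g_m$ in $G$ and forms the product $(g_1,g_1\phi)\cdots(g_m,g_m\phi)(g,g\phi)$, whose second coordinate is automatically $h$ because $E(S)$ is right zero and only the last factor matters. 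Your set $Y$ does generate $S$ in general, but the correct justification is this last-factor argument (e.g.\ obtain $e_k$ by writing the identity of $H_e$ as a word in $G$ ending in $g_k$, namely a word for $g_k^{-1}$ followed by $g_k$), not the finite-order argument. If you add the hypothesis that $S$ is finite --- which covers the paper's application in Corollary~\ref{rank} --- your proof is complete as written.
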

\begin{proof}
	Suppose that $G$ is a minimal generating set of the group $H_e$. If $|G|\leq|E(S)|$, then there is an injection $\phi:G\to E(S)$. Refer to \cite[Theorem 1.27]{Clifford1}, we have $S\cong H_e\times E(S)$. Let $H=\{(g,g\phi):g\in G\}$ and $K=\{(e,f):f\in E(S)\setminus\im\phi\}$. We assert that $H\cup K$ is a generating set of $H_e\times E(S)$. For, let $(x,h)\in H_e\times E(S)$. If $h=g\phi$ for some $g\in G$, then $xg^{-1}=g_1g_2\cdots g_m$ for some $g_1,g_2,\ldots,g_m\in G$. Hence
	$$
	(x,h)=(xg^{-1}g,h)=(g_1g_2\cdots g_mg,g\phi).
	$$
	Since $E(S)$ is right zero, we obtain
	$$
	(g_1g_2\cdots g_mg,g\phi)=(g_1,g_1\phi)(g_2,g_2\phi)\cdots(g_m,g_m\phi)(g,g\phi)
	$$
	which implies that $(x,h)\in\langle H\rangle\subseteq\langle H\cup K\rangle$. If $h\in E(S)\setminus\im\phi$, then $x=g_1g_2\cdots g_m$ for some $g_1,g_2,\ldots,g_m\in G$ and so
	$$
	(x,h)=(xe,h)=(g_1g_2\cdots g_me,h)=(g_1,g_1\phi)(g_2,g_2\phi)\cdots(g_m,g_m\phi)(e,h).
	$$
	Thus $(x,h)\in\langle H\cup K\rangle$. We conclude that 
	$$
	\rank(S)=\rank(H_e\times E(S))\leq|H\cup K|=|H|+|K|=|G|+(|E(S)|-|G|)=|E(S)|.
	$$
	As mentioned before, we have $\rank(S)\geq|E(S)|$. Therefore,
	$$
	\rank(S)=|E(S)|=\max\{|G|,|E(S)|\}.
	$$

	On the other hand, assume that $|G|>|E(S)|$. Then there is a surjection $\phi:G\to E(S)$. Let $H=\{(g,g\phi):g\in G\}$. By the same argument as above, we can show that $H$ is a generating set of $S$ (up to isomorphism). Hence $\rank(S)\leq|H|=|G|$. Since $G$ is a minimal generating set of the group $H_e$, we obtain $\rank(S)=|G|=\max\{|G|,|E(S)|\}$.
\end{proof}

Let $X/E=\{A_i:i\in I\}$. We have $|E(Q_{E^*}(X))|=\prod\limits_{i\in I}|A_i|$. Refer to Proposition \ref{prop: Q iso direct product}, $Q_{E^*}(X)$ is isomorphic to the direct product of $S_{X/E}$ and $E(Q_{E^*}(X))$. Moreover, it is well-known that the symmetric group on a set $Y$ has rank $2$ when $|Y|\geq2$. By using Theorem \ref{thm: rank of right group}, we obtain the rank of $Q_{E^*}(X)$ as follows.

\begin{cor}\label{rank}
	Let $E$ be a nontrivial equivalence relation on a nonempty set $X$. Let $X/E=\{A_i:i\in I\}$ be such that $\prod\limits_{i\in I}|A_i|=m$. Then $\rank(Q_{E^*}(X))=\max\{2,m\}$.
\end{cor}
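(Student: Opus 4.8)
The plan is to obtain this immediately from Theorem \ref{thm: rank of right group}, using Proposition \ref{prop: Q iso direct product} and Lemma \ref{01} to identify the two quantities that the theorem compares. First I would recall that $Q_{E^*}(X)$ is a right group, so by Proposition \ref{prop: Q iso direct product} it is isomorphic to $S_{X/E}\times E(Q_{E^*}(X))$ and, for any idempotent $e$, its maximal subgroup $H_e$ is isomorphic to $S_{X/E}$. Taking $G$ to be a minimal generating set of $H_e$, so that $|G|=\rank(S_{X/E})$, Theorem \ref{thm: rank of right group} yields $\rank(Q_{E^*}(X))=\max\{|G|,\,|E(Q_{E^*}(X))|\}$, and by Lemma \ref{01} we have $|E(Q_{E^*}(X))|=\prod_{i\in I}|A_i|=m$. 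Thus the whole problem reduces to evaluating $\max\{\rank(S_{X/E}),\,m\}$.

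The second step is the key observation that nontriviality of $E$ forces $m\geq 2$: since $E\neq I_X$, at least one class $A_{i_0}$ has two or more elements, and every other factor of $\prod_{i\in I}|A_i|$ is at least $1$, so $m\geq|A_{i_0}|\geq 2$. I would then invoke the standard bound $\rank(S_Y)\leq 2$ for every set $Y$ (with equality when $|Y|\geq 3$), recalled just before the corollary. If $|X/E|\geq 3$ this gives $\rank(S_{X/E})=2$ and hence $\max\{\rank(S_{X/E}),m\}=\max\{2,m\}$; if $|X/E|\leq 2$ then $\rank(S_{X/E})\leq 1<2\leq m$, so $\max\{\rank(S_{X/E}),m\}=m=\max\{2,m\}$. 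Either way, $\rank(Q_{E^*}(X))=\max\{2,m\}$.

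Since every ingredient is already available, I do not expect a genuine obstacle; the only care needed is the bookkeeping in the degenerate cases $|X/E|=1$ (that is, $E=X\times X$) and $|X/E|=2$, where $\rank(S_{X/E})$ is $0$ or $1$ rather than $2$, together with the remark that the hypothesis ``$E$ nontrivial'' is precisely what guarantees $m\geq 2$ and therefore makes the uniform formula $\max\{2,m\}$ (in fact equal to $m$) correct in those cases as well. It is also worth noting that Theorem \ref{thm: rank of right group} is stated for arbitrary right groups, so no finiteness of $X$ is needed and the argument is valid verbatim for infinite index set $I$, with $m$ read as a cardinal.
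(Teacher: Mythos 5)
Your proposal is correct and follows essentially the same route as the paper, which likewise derives the corollary by combining Proposition \ref{prop: Q iso direct product}, Lemma \ref{01}, the fact that $\rank(S_{X/E})\leq 2$, and Theorem \ref{thm: rank of right group}. Your explicit treatment of the degenerate cases $|X/E|\leq 2$ (where $\rank(S_{X/E})<2$ but nontriviality of $E$ forces $m\geq 2$) is a small but welcome refinement of the paper's blanket appeal to ``$S_Y$ has rank $2$ when $|Y|\geq 2$.''
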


%%%%%%%%%%%%%%%%%%%%%%%%%%Maximal Subsemigroups%%%%%%%%%%%%%%%%%%%%%%%%%%%%%%%%%%%%%%%%%%%%%%%%%%%%%%%%%%%%%%%%%%%%%%%%%%%%%%%%%%%%%%%%%%%%%%

\section{Maximal Subsemigroups}

Let's revisit the concept of a maximal subsemigroup of a semigroup $S$. A maximal subsemigroup of $S$ refers to a subset of $S$ that is both a proper subsemigroup (i.e., not equal to $S$) and not contained within any other proper subsemigroup of $S$. Analogously, a maximal proper subgroup of a group $G$ can be defined as a subgroup that cannot be contained within any other proper subgroup of $G$. It is well-known that, when $G$ is a finite group, every subsemigroup of $G$ becomes a subgroup.

In this section, we will provide a description and enumeration of the maximal subsemigroups present in any right group which can be written as the direct product of a finite group and a right zero semigroup, as well as in $Q_{E^*}(X)$, where $X$ is a finite set. We begin this section by stating the following lemma.

\begin{lem}\label{lem: right group when G finite}
	Let $S$ be a right group which can be written as the direct product of a finite group $G$ and a right zero semigroup $E$. Then every subsemigroup of $S$ is also a right group.
\end{lem}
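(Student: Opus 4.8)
The plan is to exploit the explicit product decomposition $S=G\times E$ and reduce everything to the characterization of right groups in Lemma~\ref{111}, together with the fact recalled just above the statement that every nonempty subsemigroup of a finite group is a subgroup. So fix a subsemigroup $T$ of $S$.

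The first step is to slice $T$ by its second coordinate. Put $E_T=\{e\in E:(g,e)\in T\ \text{for some}\ g\in G\}$ and, for each $e\in E_T$, let $G_e=\{g\in G:(g,e)\in T\}$, so that $T=\bigcup_{e\in E_T}\bigl(G_e\times\{e\}\bigr)$, a disjoint union. Because $E$ is a right zero semigroup, multiplication in $S$ satisfies $(a,e)(b,e)=(ab,e)$, so each $G_e$ is a nonempty subsemigroup of the finite group $G$, hence a subgroup of $G$. Consequently $G_e\times\{e\}$, with the operation inherited from $S$, is a group isomorphic to $G_e$.

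The second step identifies the idempotents of $T$. A subgroup of $G$ has $1_G$ as its identity, so the identity of the group $G_e\times\{e\}$ is $(1_G,e)$; and since $(a,e)$ is idempotent in $S$ exactly when $a^2=a$, i.e.\ $a=1_G$, we get $E(T)=\{(1_G,e):e\in E_T\}$. For $e,f\in E_T$ one has $(1_G,e)(1_G,f)=(1_G,f)$, so $E(T)$ is a right zero subsemigroup of $T$. Thus $T$ is a disjoint union of the groups $G_e\times\{e\}$ whose set of identities is a right zero subsemigroup, and Lemma~\ref{111}(2) yields that $T$ is a right group.

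Since the product structure does most of the work, there is no serious obstacle; the one point needing care is that \emph{all} the subgroups $G_e$ share the same identity $1_G$, which is exactly what forces $E(T)$ to be a right zero subsemigroup (rather than merely a band), and it is also where finiteness of $G$ is essential --- for infinite $G$ a subsemigroup of $G\times E$ need not even be regular. Alternatively, one can bypass the decomposition and use Lemma~\ref{111}(3): $S$, and hence $T$, is left cancellative because $G$ is; and $T$ is regular since for $(a,e)\in T$, putting $k=|G|$ gives $a^k=1_G$, so $(a,e)(a,e)^{\,2k-1}(a,e)=(a,e)^{\,2k+1}=(a,e)$ with $(a,e)^{\,2k-1}\in T$.
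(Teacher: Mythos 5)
Your proof is correct, and it takes a more careful route than the paper's. The paper's own proof is a one-liner: it asserts that any subsemigroup $T$ of $G\times E$ ``can be written as the direct product of a subsemigroup $H$ of $G$ and a subsemigroup $F$ of $E$,'' then observes $H$ is a subgroup (finiteness of $G$) and $F$ is right zero, and concludes via the standard fact that a group times a right zero semigroup is a right group. That asserted decomposition is not obvious --- it fails for subsemigroups of general direct products --- and the paper gives no justification; it does hold here, but only because closure of $T$ forces $G_e G_f\subseteq G_f$, whence $G_e=G_e\cdot 1_G\subseteq G_f$ and all your slices $G_e$ coincide (again using that each $G_f$ is a subgroup containing $1_G$). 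You sidestep this entirely: by allowing the slices to differ a priori and checking directly that $E(T)=\{(1_G,e):e\in E_T\}$ is right zero, you land in Lemma~\ref{111}(2) without ever needing $T$ to be a rectangular product. Your alternative argument via Lemma~\ref{111}(3) (left cancellativity inherited from $S$, regularity from $(a,e)^{2k+1}=(a,e)$ with $k=|G|$) is even shorter and equally valid. Both of your arguments buy a gap-free proof and make transparent exactly where finiteness of $G$ is used; the paper's version is quicker but leans on an unproved structural claim.
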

\begin{proof}
	Let $T$ be a subsemigroup of $S$. Then $T$ can be written as the direct product of a subsemigroup $H$ of $G$ and a subsemigroup $F$ of $E$. It follows that $H$ is a subgroup of $G$ since $G$ is finite. Moreover, $F$ is also a right zero semigroup. Therefore, $T$ can be written as the direct product of the group $H$ and the right zero semigroup $F$, and thus it is also a right group.
\end{proof}

By the above lemma, we obtain the following results immediately.

\begin{prop}\label{prop: Every subsemigroup of a finite right group is a right group}
	Every subsemigroup of a finite right group is also a right group.
\end{prop}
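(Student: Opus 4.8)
The plan is to deduce the statement directly from Lemma~\ref{lem: right group when G finite} by first putting an arbitrary finite right group into the required direct-product form. Recall that, by \cite[Theorem 1.27]{Clifford1}, any right group $S$ is isomorphic to a direct product $G\times E$, where $G=Se=H_e$ for a fixed idempotent $e\in E(S)$ and $E=E(S)$ is the right zero semigroup consisting of all idempotents of $S$. The first step is simply to observe that when $S$ is finite, the subsets $G=H_e$ and $E=E(S)$ of $S$ are both finite; thus $S\cong G\times E$ with $G$ a finite group and $E$ a finite right zero semigroup, which is exactly the hypothesis of Lemma~\ref{lem: right group when G finite}.

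Next I would transport the conclusion along the isomorphism. Let $T$ be a subsemigroup of the finite right group $S$ and let $\psi:S\to G\times E$ be the isomorphism above. Then $T\psi$ is a subsemigroup of $G\times E$, so Lemma~\ref{lem: right group when G finite} gives that $T\psi$ is a right group. Since $T\cong T\psi$ and the property of being a right group is preserved under isomorphism (for instance, via any of the equivalent characterizations in Lemma~\ref{111}), it follows that $T$ is a right group. As $T$ was an arbitrary subsemigroup, this proves the proposition.

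I do not expect a real obstacle: the entire content rests on the two ingredients already in hand, namely the Clifford--Preston decomposition $S\cong G\times E$ and Lemma~\ref{lem: right group when G finite}, plus the trivial remark that finiteness of $S$ forces finiteness of $G$. The only point that merits a word of care is the last step of carrying the conclusion back through $\psi$, and that is immediate once one notes that the class of right groups is closed under isomorphism.
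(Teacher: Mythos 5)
Your proposal is correct and follows essentially the same route as the paper, which derives the proposition ``immediately'' from Lemma~\ref{lem: right group when G finite} via the decomposition $S\cong G\times E(S)$ of \cite[Theorem 1.27]{Clifford1}; you have merely made explicit the (correct and trivial) observations that finiteness of $S$ forces finiteness of $G$ and that being a right group is preserved under isomorphism.
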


\begin{lem}\label{lem: subsemigroup equal}
	Let $S$ be a right group which can be written as the direct product of a finite group $G$ and a right zero semigroup $E$. Let $T$ be a subsemigroup of $S$ and $e\in E(S)$. If $Te=Se$ and $E(T)=E(S)$, then $T=S$. 
\end{lem}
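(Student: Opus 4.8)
The plan is to exploit the direct-product structure of $S$. By (the proof of) Lemma~\ref{lem: right group when G finite}, the subsemigroup $T$ itself decomposes as $T = H \times F$, where $H$ is a subgroup of the finite group $G$ and $F$ is a sub-right-zero-semigroup of $E$; so it suffices to show $H = G$ and $F = E$, since this forces $T = H \times F = G \times E = S$.

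First I would identify the idempotents. As $G$ is a group its unique idempotent is $1_G$, whereas every element of the right zero semigroup $E$ is idempotent; hence $E(S) = E(G \times E) = \{1_G\} \times E$ and, in the same way, $E(T) = \{1_G\} \times F$. The hypothesis $E(T) = E(S)$ then gives $\{1_G\}\times F = \{1_G\}\times E$, i.e.\ $F = E$.

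Next, write the given idempotent as $e = (1_G, f_0)$ with $f_0 \in E$. Note $e \in E(T)$ since $E(T) = E(S)$, so $Te$ makes sense, and a direct computation in $G \times E$ (using that $E$ is right zero, so $f\cdot f_0 = f_0$) gives
$$
Se = \{(g,f)(1_G,f_0) : g \in G,\ f \in E\} = G \times \{f_0\}, \qquad Te = H \times \{f_0\}.
$$
The hypothesis $Te = Se$ now yields $H \times \{f_0\} = G \times \{f_0\}$, hence $H = G$. Together with $F = E$ this gives $T = H\times F = S$, as required.

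I do not expect a serious obstacle; the one point deserving care is the decomposition $T = H\times F$ of an arbitrary subsemigroup of $G\times E$, which is exactly the fact already established inside the proof of Lemma~\ref{lem: right group when G finite} and may simply be cited. If one wishes to reprove it: for $(g,f)\in T$ the finiteness of $G$ makes $g$ of some finite order $k$, so $(g,f)^{k} = (1_G,f)\in T$; thus $T$ contains $(1_G,f)$ for every $f$ occurring as a second coordinate of an element of $T$, and multiplying such elements on the left of arbitrary members of $T$ shows that $T$ equals (set of first coordinates of $T$) $\times$ (set of second coordinates of $T$), the former being a subgroup of $G$ by finiteness.
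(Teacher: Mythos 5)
Your proof is correct, but it takes a more concrete route than the paper. The paper's argument never extracts the coordinates: it notes via Lemma~\ref{lem: right group when G finite} that $T$ is itself a right group, then uses that $E(S)$ is a right zero band (so $ef=f$) to compute $Tf=T(ef)=(Te)f=(Se)f=Sf$ for every $f\in E(S)=E(T)$, and concludes from the disjoint-union decomposition $T=\bigcup_{f\in E(T)}Tf=\bigcup_{f\in E(S)}Sf=S$. You instead identify $S$ with $G\times E$, prove (rather than merely cite) the rectangular decomposition $T=H\times F$ of a subsemigroup, pin down $E(S)=\{1_G\}\times E$ and $Se=G\times\{f_0\}$, and read off $F=E$ and $H=G$ coordinatewise. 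The paper's version is shorter and stays at the level of the abstract structure theory of right groups; yours is more self-contained, and in particular your final paragraph supplies the argument (that $(g,f)^k=(1_G,f)\in T$ and then left-multiplication fills out the rectangle $H\times F$) which the paper's Lemma~\ref{lem: right group when G finite} asserts with essentially no justification --- that is a genuine added value, since a subsemigroup of a direct product need not be rectangular in general and the finiteness of $G$ together with the right zero structure of $E$ is exactly what makes it work here. Both proofs use the hypotheses in the same way, so this is a difference of packaging rather than of substance.
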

\begin{proof}
	We note by Lemma \ref{lem: right group when G finite} that $T$ is a right group. Assume that $Te=Se$ and $E(T)=E(S)$. Then $Tf=Tef=Sef=Sf$ for all $f\in E(S)$. Hence
	$$
	T=\bigcup_{f\in E(T)}Tf=\bigcup_{f\in E(S)}Tf=\bigcup_{f\in E(S)}Sf=S.
	$$
\end{proof}

To characterize a maximal subsemigroup of a right group, we need the following lemma which appeared in \cite{Wilson}.

\begin{lem}[{\cite[Lemma 4.3]{Wilson}}]\label{lem: maximal subsemigroup lack one element}
	Let $S$ be a semigroup and let $M$ be a subsemigroup of $S$ such that $|S\setminus M|=1$. Then $M$ is a maximal subsemigroup of $S$.
\end{lem}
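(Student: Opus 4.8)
The plan is to argue directly from the definition of maximal subsemigroup recalled at the start of this section. Write $S\setminus M=\{s\}$ for the unique element of $S$ lying outside $M$. The first step is simply to record that $M$ is a \emph{proper} subsemigroup of $S$, which is immediate because $s\in S\setminus M$; this is needed since the definition of maximality requires $M$ to be proper in the first place.

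For the maximality itself, I would take an arbitrary subsemigroup $N$ of $S$ with $M\subseteq N$ and $M\neq N$, and prove that $N=S$. Since $M\subsetneq N\subseteq S$, the set $N\setminus M$ is nonempty and is contained in $S\setminus M=\{s\}$, so $N\setminus M=\{s\}$ and in particular $s\in N$. Hence $N\supseteq M\cup\{s\}=S$, which forces $N=S$. Thus no proper subsemigroup of $S$ can properly contain $M$, and combined with the first step this is exactly the statement that $M$ is a maximal subsemigroup of $S$.

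There is essentially no obstacle here: the whole argument is a cardinality/set-theoretic observation, and one never has to use closure of $N$ under the semigroup operation beyond what is already assumed, since the conclusion $N=S$ follows purely from $|S\setminus M|=1$. The only point that deserves explicit mention is the properness of $M$, which is why it is treated as a separate (trivial) step rather than being folded into the maximality argument.
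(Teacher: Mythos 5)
Your argument is correct and complete: properness is immediate from $s\in S\setminus M$, and any subsemigroup $N$ with $M\subsetneq N\subseteq S$ must contain the single missing element $s$ and hence equal $S$. The paper itself gives no proof of this lemma (it is quoted from Wilson, Lemma 4.3), so there is nothing to contrast with; your purely set-theoretic argument is the standard one, and you are right that closure of $N$ under the operation is never actually needed beyond the hypothesis that $N$ is a subsemigroup.
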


By Lemma \ref{lem: maximal subsemigroup lack one element} and the dual statement of Example 4.4 in \cite{Wilson}, we have the following proposition.

\begin{prop}\label{prop: maximal subsemigroup of right zero}
	Let $S$ be a right zero semigroup. Then $M$ is a maximal subsemigroup of $S$ if and only if $M=S\setminus\{x\}$ for some $x\in S$.
\end{prop}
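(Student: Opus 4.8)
The plan is to use the defining identity of a right zero semigroup, namely $xy = y$ for all $x,y \in S$, which has the strong consequence that \emph{every} nonempty subset of $S$ is a subsemigroup: if $\emptyset \neq A \subseteq S$ and $a,b \in A$, then $ab = b \in A$, so $A$ is closed under multiplication. I would record this as the single observation driving the whole proof; once it is in hand, the proposition becomes a statement purely about maximal proper subsets of the underlying set $S$.

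For the implication ($\Leftarrow$), suppose $M = S \setminus \{x\}$ for some $x \in S$ (so $|S| \geq 2$, hence $M \neq \emptyset$). By the observation, $M$ is a subsemigroup of $S$, and $|S \setminus M| = 1$, so Lemma \ref{lem: maximal subsemigroup lack one element} applies verbatim and yields that $M$ is a maximal subsemigroup of $S$.

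For the implication ($\Rightarrow$), let $M$ be a maximal subsemigroup of $S$. Since $M$ is proper, $S \setminus M \neq \emptyset$; fix $x \in S \setminus M$. Then $M \subseteq S \setminus \{x\} \subsetneq S$, and $S \setminus \{x\}$ is a proper subsemigroup of $S$ by the observation. Maximality of $M$ forces $M = S \setminus \{x\}$, which is the desired form.

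I do not expect a genuine obstacle here: the entire content is the remark that in a right zero semigroup closure under the product is automatic, combined with the cited lemma. The only point deserving a word of care is the degenerate case $|S| = 1$, in which $S$ has no proper subsemigroup at all (the empty set not being admitted as a semigroup), so the equivalence holds vacuously; for $|S| \geq 2$ the sets $S \setminus \{x\}$ are always nonempty and the argument above is complete.
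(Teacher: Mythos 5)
Your proposal is correct and follows essentially the same route as the paper, which simply cites Lemma \ref{lem: maximal subsemigroup lack one element} for the ``if'' direction and defers the rest to the dual of an example in \cite{Wilson}; your key observation that every nonempty subset of a right zero semigroup is automatically a subsemigroup is exactly what makes that cited argument work, and your handling of the degenerate case $|S|=1$ is a sensible extra precaution.
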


By the above proposition, we conclude that every nontrivial right zero semigroup has a maximal subsemigroup. In addition, we note that if a finite right zero semigroup $S$ has $m$ elements, then the number of its maximal subsemigroups is also $m$.

Now, we provide a characterization of maximal subsemigroups of any right group which can be written as the direct product of a finite group and a right zero semigroup.

\begin{thm}\label{thm: maximal subsemigroup of finite right group}
	Let $S$ be a right group which can be written as the direct product of a finite group $G$ and a nontrivial right zero semigroup $E(S)$, and let $T$ be a subsemigroup of $S$. Then $T$ is a maximal subsemigroup of $S$ if and only if $T$ can be written as a direct product $H\times E(S)$ or $G\times F$ where $H$ is a maximal subgroup of $G$ and $F$ is a maximal subsemigroup of $E(S)$.
\end{thm}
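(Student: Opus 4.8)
The plan is to reduce the whole statement to an analysis of the \emph{product structure} of subsemigroups of $S=G\times E(S)$. First I would make precise the decomposition that is implicit in the proof of Lemma~\ref{lem: right group when G finite}: every subsemigroup $T$ of $S$ has the form $T=H\times F$, where $H=\{g\in G:(g,f)\in T\text{ for some }f\}$ is a subsemigroup of $G$ — hence a subgroup, because $G$ is finite — and $F=\{f\in E(S):(g,f)\in T\text{ for some }g\}$ is a subsemigroup of the right zero semigroup $E(S)$. The only point needing an argument is the inclusion $H\times F\subseteq T$: given $h\in H$ and $f\in F$, choose $(h,f_1),(g_0,f)\in T$; since $G$ is finite some power satisfies $(g_0,f)^k=(e_G,f)$ (as $f^k=f$ in a right zero semigroup), so $(e_G,f)\in T$ and then $(h,f_1)(e_G,f)=(h,f)\in T$. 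From here on, maximal subsemigroups of $S$ correspond exactly to the inclusion-maximal proper ``boxes'' $H\times F$ inside $G\times E(S)$.

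For the \textbf{sufficiency} direction, suppose $H$ is a maximal subgroup of $G$ and put $T=H\times E(S)$, which is proper since $H\subsetneq G$. If $U$ is a subsemigroup with $T\subseteq U\subseteq S$, write $U=K\times F'$ as above; then $E(S)\subseteq F'\subseteq E(S)$ forces $F'=E(S)$ and $H\subseteq K\subseteq G$, so maximality of $H$ gives $K=H$ (whence $U=T$) or $K=G$ (whence $U=S$). Thus $T$ is maximal. The symmetric argument, with the two coordinates interchanged and using that any subsemigroup of the finite group $G$ is a subgroup (and, if one wishes, Proposition~\ref{prop: maximal subsemigroup of right zero} to describe $F$ concretely), shows that $T=G\times F$ is maximal whenever $F$ is a maximal subsemigroup of $E(S)$.

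For the \textbf{necessity} direction, let $T=H\times F$ be a maximal subsemigroup of $S$. Since $T\neq S$, we cannot have both $H=G$ and $F=E(S)$. If $H\subsetneq G$ \emph{and} $F\subsetneq E(S)$, then (using $F\neq\emptyset$) we obtain the proper chain $T=H\times F\subsetneq G\times F\subsetneq S$, contradicting maximality. Hence exactly one of $H=G$, $F=E(S)$ holds. If $H=G$ and $F\subsetneq E(S)$: any subsemigroup $F\subseteq F'\subsetneq E(S)$ yields $T\subseteq G\times F'\subsetneq S$, so maximality forces $F'=F$, i.e.\ $F$ is a maximal subsemigroup of $E(S)$. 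If $F=E(S)$ and $H\subsetneq G$: any subgroup $H\subseteq K\subsetneq G$ yields $T\subseteq K\times E(S)\subsetneq S$, so $K=H$, i.e.\ $H$ is a maximal proper subgroup of $G$ — and being a maximal subsemigroup or a maximal subgroup of $G$ coincide since $G$ is finite.

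I expect the only genuine work to be the structure lemma $T=H\times F$: finiteness of $G$ is essential there, since it is what makes the idempotent-power trick available, and without it the claim fails. Everything after that is routine bookkeeping about the inclusion lattice of boxes, with a couple of degenerate cases ($G$ trivial, or $F$ a singleton) to watch. I would also note in a remark that, combined with Proposition~\ref{prop: maximal subsemigroup of right zero} and the fact that a finite group has only finitely many maximal subgroups, this theorem immediately gives an enumeration of the maximal subsemigroups of $S$, and hence of $Q_{E^*}(X)$ for finite $X$, which is presumably the next goal of the paper.
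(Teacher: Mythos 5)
Your proposal is correct and follows essentially the same route as the paper: decompose every subsemigroup of $S=G\times E(S)$ as a box $H\times F$ and then do bookkeeping in the inclusion lattice of boxes. The one substantive difference is that you actually prove the box decomposition (via the idempotent-power trick $(g_0,f)^k=(e_G,f)$ followed by $(h,f_1)(e_G,f)=(h,f)$), whereas the paper's Lemma~\ref{lem: right group when G finite} asserts that $T$ ``can be written as'' such a direct product without argument, so your version fills a genuine gap in the exposition and correctly isolates where finiteness of $G$ is used. Your necessity direction is organized slightly differently as well --- you rule out the case where both coordinates are proper via the chain $H\times F\subsetneq G\times F\subsetneq S$, while the paper squeezes $T$ between itself and a maximal box and invokes Lemma~\ref{lem: subsemigroup equal} --- but this is cosmetic, and both arguments are sound.
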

\begin{proof}
	Assume that $T$ is a maximal subsemigroup of $S$. Then, by Lemma \ref{lem: right group when G finite}, $T$ is a right group which can be written as a direct product $Te\times E(T)$ where $e$ is an idempotent in $T$. We have $Te$ is a subgroup of $Se$ and $E(T)$ is a right zero subsemigroup of $E(S)$.
	
	If $Te$ is a proper subgroup of $Se$, then there is a maximal subgroup $H$ of $Se$ such that $Te\subseteq H\subsetneq Se$. Clearly, $H\times E(S)$ is a subsemigroup of $Se\times E(S)$ and $Te\times E(T)\subseteq H\times E(S)\subsetneq Se\times E(S)$. Since $Te\times E(T)$ is maximal, we obtain $Te\times E(T)=H\times E(S)$ and so $Te=H$ and $E(T)=E(S)$. 
	
	If $E(T)$ is a proper subsemigroup of $E(S)$, then there is a maximal subsemigroup $F$ of $E(S)$ such that $E(T)\subseteq F\subsetneq E(S)$. We have $Te\times E(T)\subseteq Se\times F\subsetneq Se\times E(S)$. Since $Te\times E(T)$ is maximal, we obtain $Te\times E(T)=Se\times F$ and so $Te=Se$ and $E(T)=F$. 
	
	Conversely, suppose that $T$ can be written as a direct product $Te\times E(S)$ where $Te$ ($e\in E(T)$) is a maximal subgroup of $Se$. To show that $T$ is maximal, let $U$ be a subsemigroup of $S$ such that $T\subseteq U\subseteq S$. Again by Lemma \ref{lem: right group when G finite}, $U$ is a right group which can be written as the direct product $Ue\times E(U)$ where $Ue$ is a subgroup of $Se$ and $E(U)$ is a right zero subsemigroup of $E(S)$. Clearly,
	$$
	Te\times E(S)\subseteq Ue\times E(U)\subseteq Se\times E(S).
	$$ 
	Hence $E(U)=E(S)$. By maximality of $Te$, we obtain $Te=Ue$ or $Ue=Se$ which implies by Lemma \ref{lem: subsemigroup equal} that $T=U$ or $U=S$. It is concluded that $T$ is maximal.

	Finally, assume that $T$ can be written as a direct product $Se\times E(T)$ where $Te=Se$ and $E(T)$ is a maximal subsemigroup of $E(S)$. To show that $T$ is maximal, let $U$ be a subsemigroup of $S$ such that $T\subseteq U\subseteq S$. By the same argument as above, we can write
	$$
	Se\times E(T)\subseteq Ue\times E(U)\subseteq Se\times E(S).
	$$ 
	Hence $Te=Se=Ue$. By maximality of $E(T)$, we obtain $E(T)=E(U)$ or $E(U)=E(S)$. Again by Lemma \ref{lem: subsemigroup equal}, $T=U$ or $U=S$ and so $T$ is maximal.
\end{proof}

As a direct consequence of Theorem \ref{thm: maximal subsemigroup of finite right group} and Proposition \ref{prop: maximal subsemigroup of right zero}, we obtain the following corollary.

\begin{cor}\label{cor: maximal subsemigroup of right group}
	Let $S$ be a right group which can be written as the direct product of a finite group $G$ and a nontrivial right zero semigroup $E(S)$, and let $T$ be a subsemigroup of $S$. Then $T$ is a maximal subsemigroup of $S$ if and only if $T$ can be written as the direct product $H\times E(S)$ or $G\times F$ where $H$ is a maximal subgroup of $G$ and $F=E(S)\setminus\{e\}$ for some $e\in E(S)$.
\end{cor}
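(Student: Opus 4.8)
The plan is to obtain this as an immediate translation of Theorem \ref{thm: maximal subsemigroup of finite right group} using the description of maximal subsemigroups of a right zero semigroup supplied by Proposition \ref{prop: maximal subsemigroup of right zero}. Since the hypotheses of the two results match exactly (in both, $S$ is a right group that is a direct product of a finite group $G$ and a nontrivial right zero semigroup $E(S)$), no extra setup is needed.

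First I would invoke Theorem \ref{thm: maximal subsemigroup of finite right group}: a subsemigroup $T$ of $S$ is maximal if and only if $T = H \times E(S)$ with $H$ a maximal subgroup of $G$, or $T = G \times F$ with $F$ a maximal subsemigroup of $E(S)$. The first alternative already has the desired form, so it can be carried over verbatim. For the second alternative, the key step is to rewrite the clause ``$F$ is a maximal subsemigroup of $E(S)$''. Here I would observe that $E(S)$ is itself a right zero semigroup (this is part of the standing structure of $S$), so Proposition \ref{prop: maximal subsemigroup of right zero} applies directly and yields that $F$ is a maximal subsemigroup of $E(S)$ if and only if $F = E(S)\setminus\{e\}$ for some $e \in E(S)$. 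Substituting this equivalence into the statement of Theorem \ref{thm: maximal subsemigroup of finite right group} produces exactly the claimed characterization.

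Because both Theorem \ref{thm: maximal subsemigroup of finite right group} and Proposition \ref{prop: maximal subsemigroup of right zero} are stated as biconditionals, the two equivalences compose without loss, so both directions of the corollary follow at once; I would simply write out the forward and backward implications in one or two lines each. The nontriviality of $E(S)$ is used only to guarantee that the set of maximal subsemigroups $E(S)\setminus\{e\}$ is nonempty, which is consistent with the hypothesis already present. I do not anticipate any real obstacle here: the content lies entirely in the two results being quoted, and the corollary is a bookkeeping step that replaces the abstract phrase ``maximal subsemigroup of $E(S)$'' by the concrete description ``$E(S)$ with one element removed''. The only point requiring a word of care is to note explicitly that $E(S)$, as the set of idempotents of the right group $S$, is a right zero semigroup, so that Proposition \ref{prop: maximal subsemigroup of right zero} is legitimately applicable.
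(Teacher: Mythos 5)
Your proposal is correct and matches the paper exactly: the corollary is stated there as a direct consequence of Theorem \ref{thm: maximal subsemigroup of finite right group} combined with Proposition \ref{prop: maximal subsemigroup of right zero}, which is precisely the substitution you carry out.
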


Let $S$ be a finite right group which can be written as the direct product of a group $G$ and a nontrivial right zero semigroup $E(S)$, where the number of maximal subgroup of $G$ is $n$ and $|E(S)|=m>1$. We also note by the above corollary that the number of maximal subsemigroup of $S$ is $n+m$. Furthermore, let $X$ be a finite set and $E$ an equivalence relation on $X$ which is not the identity relation. If $X/E=\{A_1,A_2,\ldots,A_n\}$ and $|A_1||A_2|\cdots|A_n|=m$, then the number of maximal subsemigroups of $Q_{E^*}(X)$ is $s_n+m$ where $s_n$ is the number of maximal subgroups of the symmetric group of order $n$ (see \cite[A290138]{Mitchell} for details).

\section{Examples}

In this section, we show an example of $Q_{E^*}(X)$ when $X$ is finite and then we will find its rank, a minimal generating set and all maximal subsemigroups which corresponds to the previous sections.

Let $X/E=\{A_1,A_2,\ldots,A_n\}$. For convenience, we denote an element
$$
\alpha=\begin{pmatrix}A_1 & A_2 & \cdots & A_n\\a_1 & a_2 & \cdots & a_n\end{pmatrix}
$$
in $Q_{E^*}(X)$ by $\alpha=(a_1,a_2,\ldots,a_n)$.

Let $X=\{1,2,3,4,5,6\}$ and let $E$ be an equivalence relation on $X$ such that $X/E=\{A_1,A_2,A_3\}$ where $A_1=\{1,2,3\}$, $A_2=\{4,5\}$ and $A_3=\{6\}$. We have
$$
Q_{E^*}(X)=\{\alpha_1,\alpha_2,\ldots,\alpha_{36}\}
$$
where  
$$
\begin{matrix}
\alpha_{1}=(1, 4, 6), & \alpha_{2}=(2, 4, 6), & \alpha_{3}=(3, 4, 6), & \alpha_{4}=(1, 5, 6), & \alpha_{5}=(2, 5, 6), \\ 
\alpha_{6}=(3, 5, 6), & \alpha_{7}=(4,1,6), & \alpha_{8}=(4,2,6), & \alpha_{9}=(4,3,6), & \alpha_{10}=(5,1, 6), \\ 
\alpha_{11}=(5,2, 6), & \alpha_{12}=(5,3, 6), & \alpha_{13}=(4,6,1), & \alpha_{14}=(4,6,2), & \alpha_{15}=(4,6,3), \\ 
\alpha_{16}=(5,6,1), & \alpha_{17}=(5,6,2), & \alpha_{18}=(5,6,3), & \alpha_{19}=(6,1,4), & \alpha_{20}=(6,2,4), \\ 
\alpha_{21}=(6,3,4), & \alpha_{22}=(6,1,5), & \alpha_{23}=(6,2,5), & \alpha_{24}=(6,3,5), & \alpha_{25}=(1, 6,4), \\ 
\alpha_{26}=(2,6,4), & \alpha_{27}=(3,6,4), & \alpha_{28}=(1,6,5), & \alpha_{29}=(2,6,5), & \alpha_{30}=(3,6,5), \\ 
\alpha_{31}=(6,4,1), & \alpha_{32}=(6,4,2), & \alpha_{33}=(6,4,3), & \alpha_{34}=(6,5,1), & \alpha_{35}=(6,5,2), \\ 
\alpha_{36}=(6,5,3).
\end{matrix}$$
Moreover, from Lemma \ref{00}, we obtain 
$$
E(Q_{E^*}(X))=\{\alpha_1,\alpha_2,\alpha_3,\alpha_4,\alpha_5,\alpha_6\}.
$$

To find a minimal generating set of $Q_{E^*}(X)$, we choose an idempotent $e=\alpha_1=(1,4,6)$. Then we obtain that the $\mathcal{H}$-class $H_e$ of $Q_{E^*}(X)$ is 
$$
\{(1,4,6),(4,1,6),(4,6,1),(6,1,4),(1,6,4),(6,4,1)\}=\{\alpha_1,\alpha_7,\alpha_{13},\alpha_{19},\alpha_{25},\alpha_{31}\}.
$$
Since $H_e$ is a symmetric group, $G=\{(1,4,6),(4,1,6)\}=\{\alpha_1,\alpha_7\}$ is a minimal generating set of $H_e$. Hence, by Theorem \ref{gen}, we have 
$$
G\cup\{\alpha_1,\alpha_2,\alpha_3,\alpha_4,\alpha_5,\alpha_6\}=\{\alpha_1,\alpha_2,\alpha_3,\alpha_4,\alpha_5,\alpha_6,\alpha_7\}
$$
is a generating set of $Q_{E^*}(X)$. Moreover, we can see that $\alpha_7^2=(4,1,6)^2=(1,4,6)=\alpha_1$ implies $Q_{E^*}(X)=\langle\alpha_2,\alpha_3,\alpha_4,\alpha_5,\alpha_6,\alpha_7\rangle$. Since $|\{\alpha_2,\alpha_3,\alpha_4,\alpha_5,\alpha_6,\alpha_7\}|=6=\max\{2,m\}$ where $m=|A_1||A_2||A_3|=3\cdot2\cdot1=6$. Therefore $\{\alpha_2,\alpha_3,\alpha_4,\alpha_5,\alpha_6,\alpha_7\}$ is a minimal generating set of $Q_{E^*}(X)$ by applying Corollary \ref{rank}.

Finally, we will find all maximal subsemigroups of $Q_{E^*}(X)$. First, consider 
$$
H_{\alpha_1}=\{\alpha_1,\alpha_7,\alpha_{13},\alpha_{19},\alpha_{25},\alpha_{31}\}.
$$ 
We have $H_{\alpha_1}$ is isomorphic to the symmetric group on $X\alpha_1$. Moreover, since $|X\alpha_1|=3$, we obtain by Lemma \ref{32} that the symmetric group on $X\alpha_1$ is isomorphic to the symmetric group of degree $3$, denoted by $S_3$. Note that the elements of symmetric group $S_3$ can be written in cycle notation as 
$$
\left\{\begin{pmatrix}&\end{pmatrix},\begin{pmatrix}1&2\end{pmatrix},\begin{pmatrix}1&3\end{pmatrix},\begin{pmatrix}2&3\end{pmatrix},\begin{pmatrix}1&2&3\end{pmatrix},\begin{pmatrix}1&3&2\end{pmatrix}\right\}
$$ 
where $\begin{pmatrix}&\end{pmatrix}$ is an identity permutation. It is easy to verify that $H_{\alpha_1}\cong S_3$ via the isomorphism $\psi:H_{\alpha_1}\rightarrow S_3$ defined by\\
$\alpha_{1}\psi=\big((1, 4, 6)\big)\psi=\begin{pmatrix}&\end{pmatrix},$\\
$\alpha_{7}\psi=\big((4,1,6)\big)\psi=\begin{pmatrix}1&2\end{pmatrix},$\\ 
$\alpha_{13}\psi=\big((4,6,1)\big)\psi=\begin{pmatrix}1&2&3\end{pmatrix},$\\
$\alpha_{19}\psi=\big((6,1,4)\big)\psi=\begin{pmatrix}1&3&2\end{pmatrix},$\\ 
$\alpha_{25}\psi=\big((1, 6,4)\big)\psi=\begin{pmatrix}2&3\end{pmatrix}$ and\\
$\alpha_{31}\psi=\big((6,4,1)\big)\psi=\begin{pmatrix}1&3\end{pmatrix}.$\\
In addition, it is well-known that all maximal subgroups of $S_3$ are
$$
\left\{\begin{pmatrix}&\end{pmatrix},\begin{pmatrix}1&2\end{pmatrix}\right\},\left\{\begin{pmatrix}&\end{pmatrix},\begin{pmatrix}2&3\end{pmatrix}\right\},\left\{\begin{pmatrix}&\end{pmatrix},\begin{pmatrix}1&3\end{pmatrix}\right\},\left\{\begin{pmatrix}&\end{pmatrix},\begin{pmatrix}1&2&3\end{pmatrix},\begin{pmatrix}1&3&2\end{pmatrix}\right\}.
$$
Then maximal subgroups of $H_{\alpha_1}$ are
$$
\begin{matrix}G_1=\{\alpha_1,\alpha_7\}, & G_2=\{\alpha_1,\alpha_{25}\}, & G_3=\{\alpha_1,\alpha_{31}\} &\begin{text}{ and }\end{text}& G_4=\{\alpha_1,\alpha_{13},\alpha_{19}\}\end{matrix}.
$$
From Proposition \ref{prop: maximal subsemigroup of right zero}, define a maximal subsemigroup $F_i$ of $E(Q_{E^*}(X))$ by
$$
F_i=E(Q_{E^*}(X))\setminus\{\alpha_i\}
$$
where $i\in\{1,2,3,\dots,6\}$. Refer to Exercise 2.6 (6) of \cite{Howie}, the map $\phi:H_{\alpha_1}\times E(Q_{E^*}(X))\rightarrow Q_{E^*}(X)$ defined by $(a,e)\phi=ae$ is an isomorphism and then we apply Corollary \ref{cor: maximal subsemigroup of right group} to identify and obtain all maximal subsemigroups of $Q_{E^*}(X)$ that are:
\begin{enumerate}[(1)]
	\item $T_1=\big(G_1\times E(Q_{E^*}(X))\big)\phi=\{\alpha_i:i\in\{1,2,3,\dots,12\}\}$,
	\item $T_2=\big(G_2\times E(Q_{E^*}(X))\big)\phi=\{\alpha_i:i\in\{1,2,3,\dots,6,25,26,27,\dots,30\}\}$,
	\item $T_3=\big(G_3\times E(Q_{E^*}(X))\big)\phi=\{\alpha_i:i\in\{1,2,3,\dots,6,31,32,33,\dots,36\}\}$,
	\item $T_4=\big(G_4\times E(Q_{E^*}(X))\big)\phi=\{\alpha_i:i\in\{1,2,3,\dots,6,13,14,15,\dots,24\}\}$,
	\item $T_5=\big(H_{\alpha_1}\times F_1\big)\phi=Q_{E^*}(X)\setminus\{\alpha_{1},\alpha_{7},\alpha_{13},\alpha_{19},\alpha_{25},\alpha_{31}\}$,
	\item $T_6=\big(H_{\alpha_1}\times F_2\big)\phi=Q_{E^*}(X)\setminus\{\alpha_{2},\alpha_{8},\alpha_{14},\alpha_{20},\alpha_{26},\alpha_{32}\}$,
	\item $T_7=\big(H_{\alpha_1}\times F_3\big)\phi=Q_{E^*}(X)\setminus\{\alpha_{3},\alpha_{9},\alpha_{15},\alpha_{21},\alpha_{27},\alpha_{33}\}$,
	\item $T_8=\big(H_{\alpha_1}\times F_4\big)\phi=Q_{E^*}(X)\setminus\{\alpha_{4},\alpha_{10},\alpha_{16},\alpha_{22},\alpha_{28},\alpha_{34}\}$,
	\item $T_9=\big(H_{\alpha_1}\times F_5\big)\phi=Q_{E^*}(X)\setminus\{\alpha_{5},\alpha_{11},\alpha_{17},\alpha_{23},\alpha_{29},\alpha_{35}\}$,
	\item $T_{10}=\big(H_{\alpha_1}\times F_6\big)\phi=Q_{E^*}(X)\setminus\{\alpha_{6},\alpha_{12},\alpha_{18},\alpha_{24},\alpha_{30},\alpha_{36}\}$.
\end{enumerate}

%%%%%%%%%%%%%%%%%%%%%%%%%%End%%%%%%%%%%%%%%%%%%%%%%%%%%%%%%%%%%%%%%%%%%%%%%%%%%%%%%%%%%%%%%%%%%%%%%%%%%%%%%%%%%%%%%%%%%%%%%

\subsection*{Acknowledgments.} This research was supported by Chiang Mai University.

\bibliographystyle{abbrv}\addcontentsline{toc}{section}{References}
\bibliography{References}

\begin{flushleft}
\vskip.3in

KRITSADA SANGKHANAN, Department of Mathematics, Faculty of Science, Chiang Mai University, Chiang Mai, 50200, Thailand; e-mail: kritsada.s@cmu.ac.th

UTSITHON CHAICHOMPOO, Doctor of Philosophy Program in Mathematics, Department of Mathematics, Faculty of Science, Chiang Mai University, Chiang Mai, 50200, Thailand; e-mail: flash.ex@hotmail.com

\end{flushleft}
\end{document}